\begin{document}

\newtheorem{theorem}{Theorem}
\newtheorem{lemma}[theorem]{Lemma}
\newtheorem{claim}[theorem]{Claim}
\newtheorem{cor}[theorem]{Corollary}
\newtheorem{prop}[theorem]{Proposition}
\newtheorem{definition}{Definition}
\newtheorem{question}[theorem]{Question}
\newcommand{\hh}{{{\mathrm h}}}

\numberwithin{equation}{section}
\numberwithin{theorem}{section}
\numberwithin{table}{section}

\def\sssum{\mathop{\sum\!\sum\!\sum}}
\def\ssum{\mathop{\sum\ldots \sum}}
\def\iint{\mathop{\int\ldots \int}}

\def\squareforqed{\hbox{\rlap{$\sqcap$}$\sqcup$}}
\def\qed{\ifmmode\squareforqed\else{\unskip\nobreak\hfil
\penalty50\hskip1em\null\nobreak\hfil\squareforqed
\parfillskip=0pt\finalhyphendemerits=0\endgraf}\fi}%%

%  use the AMS-Euler Fraktur fonts
%%%%%%%%%%%%%%%%%%%%%%%%%%%%%%%%%%
\newfont{\teneufm}{eufm10}
\newfont{\seveneufm}{eufm7}
\newfont{\fiveeufm}{eufm5}
%%%%%%%%%%%%%%%%%%%%%%%%%%%%%%%%%
%
%  allow automatic size selection in math mode
%
%%%%%%%%%%%%%%%%%%%%%%%%%%%%%%%%%
\newfam\eufmfam
     \textfont\eufmfam=\teneufm
\scriptfont\eufmfam=\seveneufm
     \scriptscriptfont\eufmfam=\fiveeufm
%%%%%%%%%%%%%%%%%%%%%%%%%%%%%%%%%
%
%  \frak works on a single symbol at a time...
%
\def\frak#1{{\fam\eufmfam\relax#1}}

\newcommand{\bflambda}{{\boldsymbol{\lambda}}}
\newcommand{\bfmu}{{\boldsymbol{\mu}}}
\newcommand{\bfxi}{{\boldsymbol{\xi}}}
\newcommand{\bfrho}{{\boldsymbol{\rho}}}

\def\fK{\mathfrak K}
\def\fT{\mathfrak{T}}

\def\fA{{\mathfrak A}}
\def\fB{{\mathfrak B}}
\def\fC{{\mathfrak C}}

\def\eqref#1{(\ref{#1})}

\def\vec#1{\mathbf{#1}}

\def\squareforqed{\hbox{\rlap{$\sqcap$}$\sqcup$}}
\def\qed{\ifmmode\squareforqed\else{\unskip\nobreak\hfil
\penalty50\hskip1em\null\nobreak\hfil\squareforqed
\parfillskip=0pt\finalhyphendemerits=0\endgraf}\fi}

%%%%%%%%%%%%%%%%%%%%%%%%%
% Alphabet calligraphie %
%%%%%%%%%%%%%%%%%%%%%%%%%
\def\cA{{\mathcal A}}
\def\cB{{\mathcal B}}
\def\cC{{\mathcal C}}
\def\cD{{\mathcal D}}
\def\cE{{\mathcal E}}
\def\cF{{\mathcal F}}
\def\cG{{\mathcal G}}
\def\cH{{\mathcal H}}
\def\cI{{\mathcal I}}
\def\cJ{{\mathcal J}}
\def\cK{{\mathcal K}}
\def\cL{{\mathcal L}}
\def\cM{{\mathcal M}}
\def\cN{{\mathcal N}}
\def\cO{{\mathcal O}}
\def\cP{{\mathcal P}}
\def\cQ{{\mathcal Q}}
\def\cR{{\mathcal R}}
\def\cS{{\mathcal S}}
\def\cT{{\mathcal T}}
\def\cU{{\mathcal U}}
\def\cV{{\mathcal V}}
\def\cW{{\mathcal W}}
\def\cX{{\mathcal X}}
\def\cY{{\mathcal Y}}
\def\cZ{{\mathcal Z}}
\newcommand{\rmod}[1]{\: \mbox{mod} \: #1}

\def\vr{\mathbf r}

\def\e{{\mathbf{\,e}}}
\def\ep{{\mathbf{\,e}}_p}
\def\em{{\mathbf{\,e}}_m}

\def\Tr{{\mathrm{Tr}}}
\def\Nm{{\mathrm{Nm}}}

 \def\SS{{\mathbf{S}}}

\def\lcm{{\mathrm{lcm}}}

\def\({\left(}
\def\){\right)}
\def\fl#1{\left\lfloor#1\right\rfloor}
\def\rf#1{\left\lceil#1\right\rceil}

\def\mand{\qquad \mbox{and} \qquad}

         \newcommand{\comm}[1]{\marginpar{\vskip-\baselineskip
%raise themarginpar a bit
         \raggedright\footnotesize
\itshape\hrule\smallskip#1\par\smallskip\hrule}}

%%%%%%%%%%%%%%%%%%%%%%%%%%%%%%%%%%%%%%%%%%%%%%%%%%%%%%%%
%%%%%%%%%%%%%%%%%%%%%%%%%%%%%%%%%%%%%%%%%%%%%%%%%%%%%%%%
%%%%%%%%%%%%%%%%%%%%%%%%%%%%%%%%%%%%%%%%%%%%%%%%%%%%%%%%
%%%%%%%%%%%%%%%%%%%%%%%%%%%%%%%%%%%%%%%%%%%%%%%%%%%%%%%%

%%%%%%%  END OF STANDARD STUFF %%%%%%%%%

%%%%%%%%%%%%%%%%%%%%%%%%%%%%%%%%%%%%%%%%%%%%%%%%%%%%%%%%
%%%%%%%%%%%%%%%%%%%%%%%%%%%%%%%%%%%%%%%%%%%%%%%%%%%%%%%%
%%%%%%%%%%%%%%%%%%%%%%%%%%%%%%%%%%%%%%%%%%%%%%%%%%%%%%%%
%%%%%%%%%%%%%%%%%%%%%%%%%%%%%%%%%%%%%%%%%%%%%%%%%%%%%%%
%%%%%%%%%%%
%%% Spell

\hyphenation{re-pub-lished}

\parskip 4pt plus 2pt minus 2pt

\mathsurround=1pt

\def\bfdefault{b}
\overfullrule=5pt

\def \F{{\mathbb F}}
\def \K{{\mathbb K}}
\def \Z{{\mathbb Z}}
\def \Q{{\mathbb Q}}
\def \R{{\mathbb R}}
\def \C{{\\mathbb C}}
\def\Fp{\F_p}
\def \fp{\Fp^*}

\title[Bilinear Exponential Sums with 
Reciprocals]{On Bilinear Exponential and Character  Sums with 
Reciprocals of Polynomials}

 \author[I. E. Shparlinski] {Igor E. Shparlinski}
 \thanks{This work was  supported in part by ARC Grant~DP140100118.}

\address{Department of Pure Mathematics, University of New South Wales,
Sydney, NSW 2052, Australia}
\email{igor.shparlinski@unsw.edu.au}

%% \date{ }

\begin{abstract} We  give nontrivial bounds for the  bilinear 
sums
$$
\sum_{u =  1}^{U} \sum_{v=1}^V \alpha_u \beta_v \ep(u/f(v))
$$
where $\ep(z)$ is a nontrivial additive character of the prime finite 
field $\F_p$ of $p$ elements, 
with integers $U$, $V$, a polynomial $f\in \F_p[X] $
and some complex weights $\{\alpha_u\}$, $\{\beta_v\}$.
In particular, for $f(X)=aX+b$ we obtain new bounds
of bilinear sums with Kloosterman fractions. 
We also obtain new bounds for similar sums with 
multiplicative characters of $\F_p$. 
 \end{abstract}

\keywords{exponential sums,  Kloosterman sums, cancellation}
\subjclass[2010]{11D79, 11L07}

\maketitle

\section{Introduction} 
\subsection{Background and motivation}
\label{sec:intro}

 Let $\F_p$ denote the finite 
 field of $p$ elements, where $p$ is   a sufficiently large 
prime. Assume that we are given two integers $1 \le U,V < p$, a {\it convex\/} set 
$$
\fC \subseteq [1,U]\times [1,V], 
$$
a polynomial $f\in \F_p[X]$ and two sequences 
of complex ``weights'' 
$$\cA = \{\alpha_u\} \mand \cB =\{\beta_v\} \quad \text{with} \quad 
|\alpha_u|, |\beta_v| \le 1.
$$
We then consider the bilinear exponential and character sums
\begin{equation*}
\begin{split}
&S_f(\cA, \cB; \fC)= 
\sum_{(u,v)\in \fC} \alpha_u \beta_v \e_p(v/f(u)),
\\ 
&T_f(\cA, \cB; \fC)= 
\sum_{(u,v)\in \fC} \alpha_u \beta_v \chi(v+f(u)),
\end{split}
\end{equation*}
where $\e_p(z) = \exp(2 \pi i z/p)$ and $\chi$ is a fixed 
nonprincipal character of $\F_p^*$, we refer to~\cite{IwKow} 
for a background on exponential sums and 
multiplicative characters. To simplify the notation 
we always assume that the zeros of $f$  in the set $\{1, \ldots, U\}$
are excluded from the summation in $S_f(\cA, \cB; \fC)$ 
(without adding any extra condition
on the summation ranges). Note that  with the weights
$\widetilde  \alpha_u  =  \alpha_u\chi(f(u))$ 
the sums
$T_f(\cA, \cB; \fC)$ can be written in the same shape as 
$S_f(\cA, \cB; \fC)$, that is,
$$
T_f(\cA, \cB; \fC)= 
\sum_{(u,v)\in \fC}\widetilde  \alpha_u \beta_v \chi(v/f(u)+1).
$$ 

Using the well known general bound of bilinear sums, 
for $\Pi = [1,U]\times [1,V]$, we have 
$|S_f(\cA, \cB; \Pi)| \le  \sqrt{UVp}$,
see, for example,~\cite[Equation~(1.4)]{BouGar1}, 
and a similar bound for $T_f(\cA, \cB; \Pi)$, which we use as the benchmarks of 
our progress. Using the same techniques as in the 
proof of Theorems~\ref{thm:Sabd mix} and~\ref{thm:K mix 1} below,  it is easy to extend these bounds
to arbitrary convex domains $\fC$ with just a logarithmic loss:
 \begin{equation}
\label{eq:trivial S1}
S_f(\cA, \cB; \fC)  = O\(\sqrt{UVp} \log p\)
 \end{equation}
and 
 \begin{equation}
\label{eq:trivial T}
T_f(\cA, \cB; \fC)  = O\(\sqrt{UVp} \log p\)
\end{equation}
(in fact Theorem~\ref{thm:Sabd mix} with $k=1$ essentially 
gives~\eqref{eq:trivial S1} as well).

In the special case of  the weights $\beta_v =1$, $v \in [1,V]$, 
we simply write $S_f(\cA;\fC)$ and $T_f(\cA;\fC)$ for the 
corresponding sums. 

Furthermore, for linear polynomials $f(X) = aX+b$ with $a,b\in \F_p$ we write 
$$
K_{a,b}(\cA, \cB; \fC) 
=\sum_{(u,v)\in \fC} \alpha_u \beta_v \e_p\(v/(au+b)\),
$$
and
$$
K_{a,b}(\cA,\fC)=\sum_{(u,v)\in \fC} \alpha_u \e_p\(v/(au+b)\), 
$$
for the  sums $S_f(\cA, \cB; \fC)$ and $S_f(\cA;\fC)$, respectively. 
These sums have a natural interpretation 
as bilinear Kloosterman sums. We also note that similar trilinear 
sums (with additional 
averaging over the modulus $p$) have recently been considered
by  Bettin \& Chandee~\cite{BetChan}; we also recall the works
of Bourgain~\cite{Bour} and Bourgain \& Garaev~\cite{BouGar2}
where different types of  bilinear Kloosterman sums
are studied.

With these notations, one of the results of~\cite{Shp2} can be written 
as the uniform over $a \in \F_p^*$ and $b\in \F_p$  bound 
\begin{equation}
\label{eq:K pure}
|K_{a,b}(\cA;\fC)| \le (\sqrt{Up}+V)p^{o(1)}, 
\end{equation}
as $p\to \infty$. 
Furthermore, for $b = 0$, a stronger  bound 
$$
|K_{a,0}(\cA;\fC)| \le (U+V)p^{o(1)}, 
$$
has been given in~\cite{Shp1}, which is essentially optimal
when $U$ and $V$ are of the same order. 

Note that in~\cite{Shp1,Shp2} only the case of $\alpha_u =1$ 
is considered, but the proofs work without any changes  
for aribtary weights $\cA$. 

\subsection{Our results}

Here we obtain a series of   results, similar to the bound~\eqref{eq:K pure} 
for the sums
$S_f(\cA;\fC)$, $T_f(\cA;\fC)$ and $K_f(\cA, \cB; \fC)$. 
These bounds are nontrivial starting from rather small values 
of $U$ and $V$, in particular in the case when the product $UV$ 
is much smaller than $p$, which is necessary for the 
bounds~\eqref{eq:trivial S1} and~\eqref{eq:trivial T} to be nontrivial. 

Our results are closely related to various questions
about congruences with reciprocals and 
multiplicative congruences with polynomials of the types 
considered in~\cite{BouGar2} and~\cite{Chang,CCGHSZ,CillGar,CGOS,CSZ,Kerr}, 
respectively. We also use one of the results from~\cite{ChanShp}
which  we extend to more generic settings; we hope this may find 
further applications. 

\subsection{Notation}

Throughout the paper, any implied constants in the symbols $O$,
$\ll$  and $\gg$  may depend on the real  parameter $\varepsilon> 0$ and
the integer parameters $d, \nu\ge 1$. We recall that the
notations $A = O(B)$, $A \ll B$ and  $A \gg B$ are all equivalent to the
statement that the inequality $|A| \le c B$ holds with some
constant $c> 0$.

When we say that a polynomial $f(X) \in \F_p[X]$ 
is of degree $d\ge 1$ we always mean the exact degree, 
that is, the leading term of $f$ is $aX^d$ with $a \in \F_p^*$. 

The elements of the field $\F_p$ 
are assumed to be represented by the set $\{0, \ldots, p-1\}$.
In particular,  we often treat elements of $\F$ as integers
or  residue classes modulo $p$. 

\section{Main results}

\subsection{Bilinear exponential sums with polynomials}

We start with extending~\eqref{eq:K pure} to arbitrary 
polynomials. 
 
\begin{theorem}
\label{thm:S pure} Uniformly over  polynomials $f(X) \in \F_p[X]$ 
of degree $d\ge 1$,  
we have 
$$
|S_f(\cA;\fC)| \le p^{d/(d+1)+o(1)}  U^{d/2}+ V p^{o(1)}. 
$$
\end{theorem}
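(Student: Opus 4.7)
The overall strategy is to reduce from the convex set $\fC$ to a rectangle, apply a moment method in $u$, and bound the resulting complete character sums via Bombieri--Weil. First I would reduce $\fC$ to the rectangle $\Pi=[1,U]\times[1,V]$ by the standard Fourier-completion trick (the same one used later in the proofs of Theorems~\ref{thm:Sabd mix} and~\ref{thm:K mix 1}), at a cost of a factor $p^{o(1)}$. Writing $S_f(\cA;\Pi)=\sum_{u=1}^U\alpha_u T(u)$ with $T(u)=\sum_{v=1}^V\ep(v/f(u))$, I would apply H\"older's inequality in $u$ with exponent $2k$, obtaining $|S_f(\cA;\Pi)|^{2k}\le U^{2k-1}\sum_{u=1}^U|T(u)|^{2k}$. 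Expanding the $2k$-th power and swapping the order of summation produces
\[
\sum_{u=1}^U|T(u)|^{2k}=\sum_{\ell}N_k(\ell)\,\Sigma_f(\ell),
\]
where $N_k(\ell)$ counts $2k$-tuples in $[1,V]^{2k}$ with signed sum $\ell=v_1+\cdots+v_k-v_{k+1}-\cdots-v_{2k}$ (so $|\ell|\le kV<p$), and $\Sigma_f(\ell)=\sum_{u=1}^U\ep(\ell/f(u))$.

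For the diagonal $\ell=0$ I would use $\Sigma_f(0)=U$ together with the standard bound $N_k(0)\ll V^{2k-1}$. For $\ell\ne 0$, the complete sum $\sum_{u\in\F_p}\ep(\ell/f(u))$ is $\ll d\sqrt p$ by Bombieri--Weil for character sums with rational functions, and the incomplete range $u\in[1,U]$ is handled by Vinogradov's completion, yielding $|\Sigma_f(\ell)|\ll d\sqrt p\log p$. Combining gives $\sum_u|T(u)|^{2k}\ll UV^{2k-1}+dV^{2k}\sqrt p\log p$, hence
\[
|S_f(\cA;\Pi)|^{2k}\ll U^{2k}V^{2k-1}+U^{2k-1}V^{2k}d\sqrt p\log p,
\]
and the $Vp^{o(1)}$ piece of the target would absorb the contribution from the regime where the first term of this estimate dominates and is itself essentially trivial.

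The step I expect to be the main obstacle is matching the precise exponents $d/(d+1)$ on $p$ and $U^{d/2}$ in the leading term: a naive optimization of $k$ in the display above produces $p^{1/(4k)}$ rather than $p^{d/(d+1)}$. To get the right exponents I would refine the diagonal count by exploiting that every value of $f$ modulo $p$ has at most $d$ preimages, which sharpens the count of solutions in $[1,U]^{2k}$ to the congruence $\sum_{i\le k}1/f(u_i)\equiv\sum_{i>k}1/f(u_i)\pmod p$ and couples the moment with the degree $d$. This sharper diagonal count is presumably where the extension of the result from~\cite{ChanShp} flagged in the introduction enters, and I expect it to be the main technical input producing the $d/(d+1)$ exponent; once available, the right choice of $k$ (tied to $d$) should rebalance the two contributions so that, together with the Bombieri--Weil input, they match $p^{d/(d+1)+o(1)}U^{d/2}+Vp^{o(1)}$.
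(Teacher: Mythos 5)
There is a genuine gap here, and it is exactly the one you flag at the end: the moment-plus-Weil route is structurally incapable of producing the bound $p^{d/(d+1)+o(1)}U^{d/2}+Vp^{o(1)}$, and the repair you sketch does not close it. After taking $2k$-th roots your estimate reads $|S_f(\cA;\fC)|\ll UV\bigl(V^{-1/(2k)}+(p^{1/2}/U)^{1/(2k)}(\log p)^{1/(2k)}\bigr)$, whose off-diagonal term is trivial for every $k$ as soon as $U\le p^{1/2}$; by contrast the theorem is designed precisely for small $U$ (for $d=1$ it is nontrivial already when $UV^2\ge p^{1+\varepsilon}$, hence for $U=p^{\varepsilon}$). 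Sharpening the diagonal cannot rescue this, because the obstruction lives entirely in the off-diagonal range, where Bombieri--Weil saves only a factor $(\sqrt p/U)^{1/(2k)}$. Moreover, the congruence $\sum_{i\le k}1/f(u_i)\equiv\sum_{i>k}1/f(u_i)\pmod p$ you propose to count is the quantity $J_{d,k}$ of Section~\ref{sec:cong recip}, which is the engine of Theorem~\ref{thm:Sabd mix} and yields a bound of a different shape, namely $V^{1-1/2k}(U+U^{k/(k+1)}p^{1/2k})$; and the extension of~\cite{ChanShp} mentioned in the introduction is used only for the multiplicative character sums of Theorem~\ref{thm:T pure}. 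Neither enters the proof of the present theorem.

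The actual argument is elementary and uses no Weil-type input at all. Since $\beta_v=1$, convexity of $\fC$ lets one write the inner sum over $v$ as a geometric progression over an interval $[X_u+1,Y_u]$ and bound it by $\min\{V,\,p/\rho(1/f(u))\}$ via~\eqref{eq:Incompl}; everything then reduces to counting, at each dyadic scale $Z$, the number $N_f(U,Z)$ of $u\le U$ for which the residue $1/f(u)$ lies within $Z$ of $0$ modulo $p$. Lemma~\ref{lem:NfUZ} bounds this by $(U^{d/2}Zp^{-1/(d+1)}+1)p^{o(1)}$: one applies Dirichlet's pigeonhole (Lemma~\ref{lem:Reduction}) to the $d+1$ coefficients of $f$ to find $t$ with all $\rho(a_it)$ small --- this is exactly where the exponents $d/(d+1)$ and $U^{d/2}$ arise, from $T_0\cdots T_d=p^d$ with $T_iU^i=W$ --- which converts the congruence $f(u)z\equiv 1\pmod p$ into a divisor problem over $\Z$ settled by $\tau(m)\le m^{o(1)}$. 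Summing the dyadic contributions gives the theorem. If you want to see where a moment method of your flavour genuinely does appear, it is in Theorem~\ref{thm:Sabd mix}, but only for $f(X)=(aX+b)^d$ and with a conclusion of a different shape.
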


Clearly, for  the bound of Theorem~\ref{thm:S pure}  to  be nontrivial 
it is necessary to have $U \ge  p^{\varepsilon}$ for some fixed $\varepsilon>0$. 
However this is not sufficient. For example,  and 
for $d=1$ and $d=2$  we also need 
$U  V^2 \ge p^{1 + \varepsilon}$ and $V \ge p^{2/3 + \varepsilon}$, respectively. 
Furthermore, for  $d \ge 3$,  the bound of Theorem~\ref{thm:S pure} is nontrivial
only for rather small values of $U$. Namely, for  $d \ge 3$  we also need 
$$
U  \le  p^{-2d/(d+1)(d-2)-\varepsilon} V^{2/(d-2)}. 
$$
This however can be used to derive a bound which is nontrivial for 
any $U$ and $V$ with 
$$
U  \ge  p^{\varepsilon} \mand  
V \ge  p^{d/(d+1) + \varepsilon} . 
$$

\begin{cor}
\label{cor:S pure slice} 
Uniformly over  polynomials $f(X) \in \F_p[X]$ 
of degree $d\ge 1$,  
we have 
$$
|S_f(\cA;\fC)| \le
\left\{
\begin{array}{ll}
V p^{o(1)} & \text{ if } U  \le  V^{2/d}  p^{-2/(d+1)},\\
 UV^{1-2/d} p^{2/(d+1)+o(1)} & \text{ otherwise}.
\end{array}
\right. 
$$
\end{cor}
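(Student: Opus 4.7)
The plan is to deduce the corollary from Theorem~\ref{thm:S pure} by a case split at the threshold $U_0 := V^{2/d} p^{-2/(d+1)}$, which is exactly the value that balances the two terms in the bound of Theorem~\ref{thm:S pure}. Note that $U_0^{d/2} = V p^{-d/(d+1)}$, so $p^{d/(d+1)} U_0^{d/2} = V$; this is the reason for the choice.

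In the first case, $U \le V^{2/d} p^{-2/(d+1)}$, I would apply Theorem~\ref{thm:S pure} directly. Since the first term $p^{d/(d+1)+o(1)} U^{d/2}$ is monotone in $U$, it is dominated by $p^{d/(d+1)+o(1)} U_0^{d/2} = V p^{o(1)}$, and the second term is already $V p^{o(1)}$, so the claimed bound $|S_f(\cA;\fC)| \le V p^{o(1)}$ follows immediately.

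In the second case, $U > V^{2/d} p^{-2/(d+1)}$, the idea is to split the $u$-range $[1,U]$ into $\lceil U/U_0 \rceil$ consecutive intervals of length at most $U_0$, and write $\fC$ as the disjoint union of the slices $\fC_j = \fC \cap (I_j \times [1,V])$. Each $\fC_j$ is convex (as an intersection of two convex sets), and after the trivial shift $u \mapsto u - u_{0,j}$ the polynomial remains of degree $d$, so Theorem~\ref{thm:S pure} applies to the slice with parameter $U_0$ in place of $U$ and yields
$$
|S_f(\cA;\fC_j)| \le p^{d/(d+1)+o(1)} U_0^{d/2} + V p^{o(1)} = V p^{o(1)},
$$
by the choice of $U_0$. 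Summing over the $\lceil U/U_0 \rceil = O(U/U_0)$ slices and substituting $U_0 = V^{2/d} p^{-2/(d+1)}$ gives
$$
|S_f(\cA;\fC)| \ll (U/U_0) \cdot V p^{o(1)} = U V^{1 - 2/d} p^{2/(d+1) + o(1)},
$$
as desired. There is no real obstacle here—the only point that requires a moment's care is verifying that the slices remain convex and that the shifted polynomial has the same degree so Theorem~\ref{thm:S pure} remains applicable to each slice with the weights $\alpha_u$ still bounded by~$1$.
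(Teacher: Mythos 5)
Your proposal is correct and follows essentially the same route as the paper: the paper slices $\fC$ vertically into $K=\rf{UV^{-2/d}p^{2/(d+1)}}$ strips of width $U/K+O(1)$, applies Theorem~\ref{thm:S pure} to the shifted polynomial $f(A+X)$ on each strip, and sums; your choice of strip width $U_0=V^{2/d}p^{-2/(d+1)}$ is exactly $U/K$, and your balancing identity $p^{d/(d+1)}U_0^{d/2}=V$ is the same optimization. The only cosmetic difference is that the paper disposes of the degenerate situation by noting the bound is trivial when $V\le p^{d/(d+1)}$ (equivalently $U_0\le 1$), whereas you handle the small-$U$ regime as a separate explicit case.
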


Unfortunately this method of proof of Theorem~\ref{thm:S pure}
does not seem to apply to the general sums $S_f(\cA, \cB; \fC)$
even in the most interesting case when $\fC = [1,U]\times [1,V]$. 
However for powers of linear functions, that is 
for polynomials of the form $f(X) = (aX+b)^d\in \F_p[X]$, 
we are abble to estimate these sums, which  we denote 
by   $S_{a,b,d}(\cA, \cB; \fC)$ in this case.

Using some recent results of Bourgain \& Garaev~\cite{BouGar2}
we derive the following estimate:

\begin{theorem}
\label{thm:Sabd mix} Uniformly over   $(a,b) \in \F_p^* \times \F_p$ and any fixed 
integer $k \ge 1$, 
we have 
$$
|S_{a,b,d}(\cA, \cB; \fC)| \le
V^{1-1/2k}\(U  + U^{k/(k+1)} p^{1/2k}\)  p^{o(1)}. 
$$
\end{theorem}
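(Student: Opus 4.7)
The plan is to apply Hölder's inequality in the $v$-variable to the $(2k)$-th power of the sum, complete the $v$-sum to a full sum modulo $p$ so that orthogonality kills the $\beta_v$-weights, and then bound the resulting diagonal contribution by a moment estimate of Bourgain and Garaev~\cite{BouGar2} for reciprocals of $d$-th powers over an interval.

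First I would reduce to the case of a rectangular domain $\fC=[1,U]\times[1,V]$. Since $\fC$ is convex, for each $u\in[1,U]$ the slice $\{v:(u,v)\in\fC\}$ is an integer interval inside $[1,V]$, whose indicator function modulo $p$ can be expanded via the standard completion-of-sum identity; this introduces only a factor $\log^2 p$ which is absorbed into $p^{o(1)}$. From this point on, I may assume $\fC=[1,U]\times[1,V]$.

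Next, I would write
$$
S=\sum_{v=1}^{V}\beta_v\,F(v),\qquad F(v)=\sum_{u=1}^{U}\alpha_u\,\e_p\bigl(v/(au+b)^d\bigr),
$$
and apply Hölder's inequality in the outer $v$-sum, using $|\beta_v|\le 1$, to obtain
$$
|S|^{2k}\le V^{2k-1}\sum_{v=1}^{V}|F(v)|^{2k}\le V^{2k-1}\sum_{v\in\F_p}|F(v)|^{2k}.
$$
Expanding the $2k$-th power as a sum over $(u_1,\dots,u_{2k})\in[1,U]^{2k}$ with $au_i+b\not\equiv 0\pmod p$, and invoking the orthogonality relation $\sum_{v\in\F_p}\e_p(v\Lambda)=p\cdot\mathbf{1}[\Lambda\equiv 0\pmod p]$, gives
$$
|S|^{2k}\le V^{2k-1}\,p\,N_{2k},
$$
where $N_{2k}$ is the number of such tuples satisfying
$$
\sum_{i=1}^{k}(au_i+b)^{-d}\equiv \sum_{i=k+1}^{2k}(au_i+b)^{-d}\pmod p.
$$

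It remains to bound $N_{2k}$. After the affine substitution $x_i=au_i+b$ the $x_i$ range over a subset of an interval of length $U$, and the corresponding moment bound of Bourgain and Garaev~\cite{BouGar2} for sums of $d$-th power reciprocals over an interval yields
$$
N_{2k}\le\bigl(U^{2k^2/(k+1)}+U^{2k}/p\bigr)\,p^{o(1)}.
$$
Substituting this into the Hölder bound, extracting the $2k$-th root, and using the identity $2k^2/(k+1)\cdot 1/(2k)=k/(k+1)$, produces the asserted estimate $V^{1-1/(2k)}\bigl(U+U^{k/(k+1)}p^{1/(2k)}\bigr)p^{o(1)}$. The main obstacle is locating and invoking the Bourgain--Garaev bound in precisely this form---with the sum-product exponent $k/(k+1)$ and with control uniform in the parameter $d$; the remaining ingredients (Hölder, completion via orthogonality, and the convex-to-rectangle reduction) are standard, and the at most one excluded zero of $au+b$ in $[1,U]$ contributes negligibly.
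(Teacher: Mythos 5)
Your proposal is correct and follows essentially the same route as the paper: H\"older's inequality in the $v$-variable, completion of the $v$-sum over $\F_p$ with orthogonality reducing everything to the congruence count $J_{d,k}(a,b;U)$, and the Bourgain--Garaev moment bound (the paper's Lemma~\ref{lem:Jdk}, which is~\cite[Proposition~1]{BouGar2} and is stated directly for $\sum_j(-1)^j(at_j+b)^{-d}\equiv 0 \pmod p$, so no affine substitution is even needed). The only cosmetic difference is that you complete the convex domain in the $v$-slices while the paper completes in the $u$-slices (absorbing the extra character into the weights $\alpha_u(\lambda)=\alpha_u\ep(-\lambda u)$); both lose only a power of $\log p$.
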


It is easy to see that Theorem~\ref{thm:Sabd mix}, used with a 
sufficiently large $k$,  is
nontrivial provided that 
%\begin{equation}
%\label{eq:cond1}
$$
U^2V \ge  p^{1 +\varepsilon} \mand  
V \ge  p^{\varepsilon} , 
$$
% \end{equation}
 for some fixed $\varepsilon>0$. 

\subsection{Bilinear Kloosterman sums}

The bound of Theorem~\ref{thm:Sabd mix} with $d=1$ also applies to the sums 
$K_{a,b}(\cA, \cB; \fC)$. However, in this case we can obtain 
more precise results. For example, using  
some result of Cilleruelo \& Garaev~\cite{CillGar}
in the argument on the proof of Theorem~\ref{thm:Sabd mix}
we obtain:

\begin{theorem}
\label{thm:K mix 1} Uniformly over $(a,b) \in \F_p^* \times \F_p$, we have 
$$
|K_{a,b}(\cA, \cB; \fC)|
\le  
 V^{3/4}\(U^{7/8}p^{1/8}+U^{1/2}p^{1/4}\) p^{o(1)}. 
 $$
\end{theorem}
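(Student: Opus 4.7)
I plan to follow the scheme of the proof of Theorem~\ref{thm:Sabd mix} in the special case $d=1$, but to replace its solution-counting input by the sharper additive-energy bound of Cilleruelo \& Garaev~\cite{CillGar} for the reciprocals of an arithmetic progression in $\F_p$. The bookkeeping below will show that this input yields precisely the two terms of Theorem~\ref{thm:K mix 1}.

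First, the convexity-and-completing argument already used in the proof of Theorem~\ref{thm:Sabd mix}---for each fixed $v$ the admissible $u$-range is an interval, whose indicator expands as a discrete Fourier series on $\F_p$ with $L^1$ Fourier-coefficient bound $p^{o(1)}$---reduces matters to the rectangular case $\fC=[1,U]\times[1,V]$ at the cost of a $p^{o(1)}$ factor. With this reduction in hand, H\"older's inequality in the $v$-variable with exponent $4$ gives
\begin{equation*}
|K_{a,b}(\cA,\cB;\fC)|^4 \le p^{o(1)}\, V^3\sum_{v=1}^V\left|\sum_{u=1}^U\alpha_u\,\ep\(\frac{v}{au+b}\)\right|^4.
\end{equation*}
Expanding the fourth power, extending the non-negative sum over $v$ to all of $\F_p$, and invoking orthogonality of additive characters produce
\begin{equation*}
|K_{a,b}(\cA,\cB;\fC)|^4 \le p^{o(1)}\, V^3\, p\, J,
\end{equation*}
where
\begin{equation*}
J = \#\{(w_1,w_2,w_3,w_4)\in\cW^4 : 1/w_1+1/w_2\equiv 1/w_3+1/w_4 \pmod{p}\},
\end{equation*}
and $\cW = \{au+b : 1\le u\le U\}\subseteq\F_p$ is an arithmetic progression of length $U$.

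The decisive ingredient is now the additive-energy bound of~\cite{CillGar}, which for the reciprocals of an interval of length $U$ in $\F_p^*$ gives $J\le (U^2 + U^{7/2}p^{-1/2})\, p^{o(1)}$. This transfers to the AP $\cW$ via the identity $1/(au+b)=a^{-1}/(u+b/a)$ and the multiplicative invariance of additive energy; at worst one splits the shifted interval into two pieces when it wraps around modulo $p$. Substituting,
\begin{equation*}
|K_{a,b}(\cA,\cB;\fC)|^4 \le V^3\(U^2p+U^{7/2}p^{1/2}\)p^{o(1)},
\end{equation*}
and extracting the fourth root, together with the elementary estimate $(\alpha+\beta)^{1/4}\le \alpha^{1/4}+\beta^{1/4}$, yields exactly the bound claimed in Theorem~\ref{thm:K mix 1}.

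The main obstacle in executing this plan is the appeal to~\cite{CillGar}: the energy estimate there is stated for reciprocals of an interval, so one must verify that the multiplicative rescaling above and the (at most one) wrap-around splitting do not damage either the $U^2$ or the $U^{7/2}p^{-1/2}$ term beyond the $p^{o(1)}$ tolerance, and that the (at most one) value of $u$ with $au+b\equiv 0\pmod{p}$, excluded by convention from the sum, does not affect the count.
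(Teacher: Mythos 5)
Your proposal is correct and follows essentially the same route as the paper: the convex domain is handled by completing the $u$-sum (absorbing the resulting additive twist into the weights), H\"older with exponent $4$ in the $v$-variable plus orthogonality reduces everything to the additive energy of the reciprocals $1/(au+b)$, and the Cilleruelo--Garaev bound $J_2(a,b;U)\le U^{7/2+o(1)}p^{-1/2}+U^{2+o(1)}$ gives the stated estimate. The only cosmetic difference is that you re-derive the arithmetic-progression case of that energy bound from the interval case by rescaling, whereas the paper cites it directly in the form needed (Lemma~\ref{lem:J2}).
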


It is easy to see that Theorem~\ref{thm:K mix 1} is
nontrivial provided that 
%\begin{equation}
%\label{eq:cond1}
$$
UV^2 \ge  p^{1 +\varepsilon} \mand  
U^2V \ge p^{1+\varepsilon} , 
$$
%\end{equation}
 for some fixed $\varepsilon>0$.

Finally, using a different approach we also obtain:

\begin{theorem}
\label{thm:K mix 2} Uniformly over   $(a,b) \in \F_p^* \times \F_p$ 
we have 
$$
|K_{a,b}(\cA, \cB; \fC)| \le
UV^{1/2} p^{1/3+o(1)}+  U^{1/2}V p^{o(1)}. 
$$
\end{theorem}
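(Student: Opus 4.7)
My plan is to apply Cauchy--Schwarz in the variable $v$ and reduce the problem to bounding a double sum over pairs $(u_1, u_2)$ controlled by the distribution of differences of reciprocals modulo~$p$. First, write $K = K_{a,b}(\cA,\cB;\fC) = \sum_v \beta_v R_v$ with
$$
R_v = \sum_{u:\,(u,v)\in\fC} \alpha_u \e_p\(v/(au+b)\),
$$
noting that by convexity of $\fC$ the slice $\{v : (u,v)\in\fC\}$ is an interval $I_u$ for every~$u$. Cauchy--Schwarz in~$v$ yields $|K|^2 \le V \sum_v |R_v|^2$, and expanding the square and swapping summation gives
$$
\sum_v |R_v|^2 = \sum_{u_1,u_2\in[1,U]} \alpha_{u_1}\overline{\alpha_{u_2}} \sum_{v\in I_{u_1}\cap I_{u_2}} \e_p\(v\,f(u_1,u_2)\),
$$
with $f(u_1,u_2) = (au_1+b)^{-1} - (au_2+b)^{-1} \pmod p$. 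The diagonal $u_1 = u_2$ contributes at most $UV$, which combined with the outer~$V$ produces the $U^{1/2}V\,p^{o(1)}$ term of the final bound.

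For the off-diagonal I would use $|\sum_{v\in I}\e_p(vx)| \le \min(V, 1/\|x/p\|)$ to reduce to the estimate
$$
T := \sum_{\substack{u_1,u_2\in[1,U]\\ u_1\ne u_2}} \min\!\(V, \frac{1}{\|f(u_1,u_2)/p\|}\) \ll U^2 p^{2/3+o(1)}.
$$
The key ingredient here is the factorization
$$
\sum_{u_1,u_2\in[1,U]^2} \e_p(\ell\,f(u_1,u_2)) = \left|\sum_{u=1}^U \e_p(\ell/(au+b))\right|^2,
$$
valid for every $\ell \in \F_p$. Combined with a Vaaler-type Fourier expansion of $\min(V, 1/\|x/p\|)$ on $\F_p$, this expresses $T$ as a weighted $\ell$-sum of second moments of the Kloosterman-type sums $\sum_u \e_p(\ell/(au+b))$. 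For each nonzero~$\ell$, the Weil bound and Polya--Vinogradov completion give $|\sum_{u=1}^U \e_p(\ell/(au+b))| \ll p^{1/2+o(1)}$; appropriately truncating the Fourier expansion, and invoking the extension of~\cite{ChanShp} developed elsewhere in the paper to handle the regime of small~$U$, yields the stated bound on~$T$.

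Putting everything together, $|K|^2 \ll V(UV + T) \ll UV^2 + U^2 V\, p^{2/3+o(1)}$, and extracting square roots gives the claimed inequality. The main obstacle will be proving the bound $T \ll U^2 p^{2/3+o(1)}$ uniformly in~$U$: the Weil bound on the individual Kloosterman-type sums suffices only for $U$ not too small compared to roughly $p^{5/12}$, so the small-$U$ regime requires the refined distribution result for reciprocals modulo~$p$ extended from~\cite{ChanShp}.
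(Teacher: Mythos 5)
Your opening moves coincide with the paper's: Cauchy--Schwarz in $v$, expanding the square, the diagonal contributing $UV^2$ to $|K|^2$ (hence the $U^{1/2}Vp^{o(1)}$ term), and the reduction to bounding $T=\sum_{u_1\ne u_2}\min\(V,\|f(u_1,u_2)/p\|^{-1}\)$ with $f(u_1,u_2)=(au_1+b)^{-1}-(au_2+b)^{-1}$. The gap is in how you propose to prove $T\ll U^2p^{2/3+o(1)}$. The Fourier-expansion-plus-Weil route cannot reach this: writing $G(x)=\min\(V,\|x/p\|^{-1}\)$ one has $\sum_{\ell}|\widehat G(\ell)|\ge G(0)=V$, so even with the optimal pointwise bound $|\sum_{u\le U}\e_p(\ell/(au+b))|\ll p^{1/2}\log p$ the nonzero frequencies contribute on the order of $Vp$; and replacing the pointwise bound by the exact second moment $\sum_{\ell}|\sum_{u}\e_p(\ell/(au+b))|^2=pU$ only yields $T\ll U^2+Up$ up to logarithms, which after multiplying by the outer $V$ merely reproduces the trivial bound $\sqrt{UVp}$. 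Moreover, the rescue you invoke for small $U$ --- ``the extension of~\cite{ChanShp} developed elsewhere in the paper'' --- is the wrong tool: that machinery (Lemmas~\ref{lem:N bound} and~\ref{lem:CharSum bound}) concerns multiplicative character sums over $\eta$-uniformly distributed sequences and feeds into Theorem~\ref{thm:T pure}; it says nothing about the distribution of $f(u_1,u_2)$ modulo $p$.

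The missing idea is structural: substitute $u_2=u_1+z$ and observe that
$$
\frac{1}{au_1+b}-\frac{1}{a(u_1+z)+b}=\frac{az}{(au_1+b)(a(u_1+z)+b)},
$$
so for each fixed $z\ne0$ the inner double sum over $(u_1,v)$ is exactly a sum of the type $S_g(\cA;\fC)$ where $g$ is a \emph{quadratic} polynomial in $u_1$. Theorem~\ref{thm:S pure} with $d=2$ --- whose engine is Lemma~\ref{lem:NfUZ}, the Dirichlet-pigeonhole count of pairs $(u,z)$ with $g(u)z\equiv 1\pmod p$, and which is precisely the device that handles small $U$ --- bounds each such sum by $Up^{2/3+o(1)}+Vp^{o(1)}$. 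Summing over the $O(U)$ values of $z$ and adding the diagonal gives $|K|^2\le \(UV^2+U^2Vp^{2/3}\)p^{o(1)}$ and hence the stated result; the exponent $1/3$ is exactly half of the exponent $d/(d+1)=2/3$ from Theorem~\ref{thm:S pure} at $d=2$. Without this reduction to reciprocals of quadratics, or some equivalent substitute for Lemma~\ref{lem:NfUZ} with $d=2$, your argument does not close in the range where the theorem is claimed.
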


Since the proof of Theorem~\ref{thm:K mix 2} uses Theorem~\ref{thm:S pure}
with $d=2$,  it is natural that Theorem~\ref{thm:K mix 2} is
nontrivial for 
%\begin{equation}
%\label{eq:cond2}
$$
U  \ge  p^{\varepsilon} \mand  
V \ge  p^{2/3 + \varepsilon},  
$$
%\end{equation}
with  some fixed $\varepsilon>0$.

\subsection{Bilinear character sums with polynomials}

Clearly, if $V \ge p^{1/4+\varepsilon}$ for some 
fixed $\varepsilon > 0$ then  for the sums  $T_f(\cA;\fC)$ the  Burgess bound,
(see~\cite[Theorem~12.6]{IwKow}), applied to the sum over $v$
implies a nontrivial estimate of the shape 
$|T_f(\cA;\fC)| \le UV p^{-\delta}$, with $\delta > 0$ that depends only 
on $\varepsilon$.

Here we use some ideas and results from 
the proof of~\cite[Theorem~1.2]{ChanShp}
to estimate  the sums $T_f(\cA;\fC)$ for smaller values of $V$. 
 
\begin{theorem}
\label{thm:T pure}  For any  $\varepsilon > 0$ there exists some $\delta>0$ such that 
uniformly over  polynomials $f(X) \in \F_p[X]$ 
of degree $d\ge 3$,  for
$$
U \ge p^{1/d +\varepsilon} \mand V \ge p^{1/4-\delta}
$$
we have 
$$
|T_f(\cA;\fC)| \le UV p^{-\delta}. 
$$
\end{theorem}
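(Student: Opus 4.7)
I would follow the Burgess--Weil strategy from the proof of \cite[Theorem~1.2]{ChanShp}, combining a short shift in the $v$-variable with a moment estimate for the character sum in the shift and an energy bound that exploits $\deg f=d\ge 3$. By convexity of $\fC$, for each $u\in[1,U]$ the fibre $\cV(u)=\{v:(u,v)\in\fC\}$ is an interval inside $[1,V]$, so
$$
T_f(\cA;\fC)=\sum_{u=1}^{U}\alpha_u\sum_{v\in\cV(u)}\chi(v+f(u)).
$$
Pick parameters $H,M$ with $HM\le V/2$. The Burgess translation identity $v\mapsto v+hm$ together with the factorisation $\chi(v+hm+f(u))=\chi(m)\chi\bigl(h+\overline{m}(v+f(u))\bigr)$ (for $\gcd(m,p)=1$) yields, after interchanging orders of summation,
$$
T_f(\cA;\fC)=\frac{1}{HM}\sum_{y\in\F_p}N(y)\Phi(y)+O(U\cdot HM),
$$
where $\Phi(y)=\sum_{h\le H}\chi(h+y)$ and
$$
N(y)=\sum_{\substack{u\le U,\ v\in\cV(u),\ m\le M\\ \overline{m}(v+f(u))\equiv y\ (\bmod\, p)}}\alpha_u\chi(m).
$$

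\emph{Hölder and moments.} For an integer $\nu\ge 1$, Hölder's inequality in $y$ gives
$$
\Bigl|\sum_y N(y)\Phi(y)\Bigr|^{2\nu}\le\Bigl(\sum_y|N(y)|\Bigr)^{2\nu-2}\Bigl(\sum_y|N(y)|^2\Bigr)\sum_{y\in\F_p}|\Phi(y)|^{2\nu}.
$$
The last factor obeys the classical moment estimate $\sum_y|\Phi(y)|^{2\nu}\ll(2\nu)!\,H^{\nu}p+(2\nu H)^{2\nu}$ (see \cite[Lemma~12.8]{IwKow}), while $\sum_y|N(y)|\le UVM$ is immediate.

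\emph{Energy and optimisation.} The heart of the argument is a bound for $E:=\sum_y|N(y)|^2$, which, after the triangle inequality, is controlled by the number of solutions of
$$
m_2\bigl(v_1+f(u_1)\bigr)\equiv m_1\bigl(v_2+f(u_2)\bigr)\pmod{p}
$$
with the natural ranges. For each fixed $(u_1,u_2,m_1,m_2)$ the pair $(v_1,v_2)$ satisfies a linear congruence with $O(V)$ solutions, reducing matters to counting quadruples $(u_1,u_2,m_1,m_2)$ making $m_2f(u_1)-m_1f(u_2)\equiv c\pmod p$ solvable for given $c\in\F_p$. Here I would invoke the Weil bound for the point count on the curve $m_2f(X)-m_1f(Y)=c$, which is absolutely irreducible for generic $(c,m_1,m_2)$ precisely when $\deg f\ge 3$; together with the assumption $U\ge p^{1/d+\varepsilon}$ (used to complete the associated Weyl sums $\sum_u\e_p(af(u))$ with a satisfactory saving) this should yield $E\le(UVM)^2p^{-1+o(1)}+O(UVM)$. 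Substituting into the Hölder estimate, choosing $H\sim M\sim V^{1/2-\eta}$ for a small $\eta>0$ and taking $\nu$ a sufficiently large constant depending only on $\varepsilon$, one can match the Burgess error $O(UHM)$ with the main term and obtain $|T_f(\cA;\fC)|\le UVp^{-\delta}$ for some $\delta=\delta(\varepsilon)>0$, valid as soon as $V\ge p^{1/4-\delta}$.

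\textbf{Main obstacle.} The principal difficulty is the energy estimate: one must secure almost the ``random'' prediction $E\approx(UVM)^2/p$ uniformly in $(m_1,m_2)$, which relies on the curve $m_2f(X)-m_1f(Y)=c$ being absolutely irreducible for generic parameters and on a Weyl-type bound with enough saving over $u\in[1,U]$. Both inputs fail for low-degree $f$, which is exactly why the hypothesis $d\ge 3$ is imposed, and the narrow margin between the Burgess threshold $V\ge p^{1/4+\varepsilon}$ and our hypothesis $V\ge p^{1/4-\delta}$ is what dictates the small size of the admissible saving $\delta$.
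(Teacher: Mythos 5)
Your architecture (Burgess shift $v\mapsto v+hm$, factorisation through $\chi(m)$, H\"older against the $2\nu$-th moment of $\Phi$, plus an energy bound) is indeed the skeleton of the paper's argument: the theorem is proved by showing via Wooley's Weyl-sum bound (Lemma~\ref{lem:f in I}) that $\{f(u)\}_{u\le U}$ is $\eta$-uniformly distributed modulo $p$ once $U\ge p^{1/d+\varepsilon}$ and $d\ge 3$, and then feeding this into a general double character sum estimate for uniformly distributed sequences (Lemma~\ref{lem:CharSum bound}), whose proof is exactly such a Burgess-type argument with shifts $k\ell$, $\ell$ prime in $[L,2L]$. The genuine gap is your energy estimate, which is where all the difficulty lives. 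The claim $E=\sum_y|N(y)|^2\le (UVM)^2p^{-1+o(1)}+O(UVM)$ amounts to square-root cancellation in the incomplete count of solutions of $m_2(v_1+f(u_1))\equiv m_1(v_2+f(u_2))\pmod p$, and it is out of reach (and essentially false) in the relevant range: already the terms with $m_1=m_2$ contribute about $M\,V\cdot\#\{u_1\ne u_2:\ f(u_1)-f(u_2)\equiv t\ (\mathrm{mod}\ p),\ |t|\le V\}$, and the only available control on that count for $U=p^{1/d+\varepsilon}$ is the uniform-distribution estimate with error term $O(U^2p^{-\eta})$, giving a contribution of order $U^2V^2Mp^{-\eta}$, which exceeds $(UVM)^2/p$ by roughly $p^{1-\eta}/M$. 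The Weil bound for the curve $m_2f(X)-m_1f(Y)=c$ cannot rescue this: after completion it leaves an error $O(p^{1/2})$ per curve, which swamps the main term $U^2$ as soon as $U<p^{1/4}$, i.e.\ for every $d\ge 5$; and the Weyl/Vinogradov/Wooley bounds for $\sum_{u\le U}\e_p(af(u))$ save only a factor $U^{-\sigma}$ with $\sigma=1/(2(d-1)(d-2))$ (Lemma~\ref{lem:Wooley}), nowhere near what your claimed energy bound requires.

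What the paper actually does at this step is different in kind and you would need some version of it. The energy (Lemma~\ref{lem:N bound}) is bounded by $LU^2Vp^{-\kappa}$, a saving of only $Lp^{\kappa}$ over the near-trivial bound $L^2U^2V$, and it is obtained not from algebraic geometry but from a combinatorial decomposition of the sequence $\{f(u)\}$ into $V$-separated subsequences (following Bourgain--Konyagin--Shparlinski, Shao and Chang--Shparlinski), using nothing about $f$ beyond the $\eta$-uniform distribution of its values. The subsequent optimisation of $K$, $L$ and $\nu$ then shows that this modest $p^{-\kappa}$ gain, injected into the H\"older step together with the Davenport--Erd\H{o}s moment bound, is exactly what allows $V$ to drop below the Burgess threshold $p^{1/4}$ by a small power $p^{-\delta}$. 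As written, your argument does not close at the energy estimate; your diagnosis that $d\ge3$ is needed for absolute irreducibility of the curve is also off-target --- the hypotheses $d\ge 3$ and $U\ge p^{1/d+\varepsilon}$ enter only to guarantee uniform distribution of $\{f(u)\}$ in short intervals.
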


We remark that in case of polynomials $f$ of degree $d=1$
much stronger results are given by  Karatsuba~\cite{Kar}.
The case of $d =2$ can also be easily included via the use 
of classical bounds of Gaussian sums instead of 
Lemma~\ref{lem:Wooley}, see below.  In fact for $U \ge p^{1/2 +\varepsilon}$ 
more standard approaches work as well, see the discussion in 
Section~\ref{sec:comm}.  

%One easily verifies that Theorem~\eqref{thm:T pure}  is nontrivial for 
%any $U$ and $V$ with 
%$$
%U  \ge  p^{\varepsilon} \mand  
%V \ge  p^{d/(d+1) + \varepsilon} . 
%$$

%Clearly, if 
%$$
%UV \ge  p^{5/6 +\varepsilon} \mand  
%\min\{U,V\}\ge  p^{\varepsilon}   
%$$
%then at least one set of the conditions~\eqref{eq:cond1}
%or~\eqref{eq:cond2} is satisfied. 

%\section{Preliminaries}

\section{General Results}

\subsection{Small residues of multiples}
\label{sec:small res}

For an integer $a$ we use $\rho(a)$ to denote the smallest by
absolute value residue of $a$ modulo $p$, that is
$$
\rho(a)= \min_{k\in \Z} |a - kp|.
$$
We need the following simple statement which follows from 
the Dirichlet pigeon-hole principle, 
see~\cite[Lemma~3.2]{CSZ} or~\cite[Theorem~2]{GG}.

\begin{lemma}
\label{lem:Reduction}
For any real numbers $T_0,\ldots, T_d$, with
$$
p> T_0,\ldots, T_d \ge 1 \mand   T_0\cdots T_d > p^{d}, 
$$
and any
integers $a_0, \ldots, a_d$ there exists an integer $t$ with $\gcd(t,p) =1$
and such that
$$
\rho(a_i t) \ll T_i, \qquad i =0, \ldots, d.
$$
\end{lemma}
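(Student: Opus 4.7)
The plan is to apply Dirichlet's box principle to the $p$ points
$$
\mathbf{v}_t = (a_0 t \bmod p,\ldots,a_d t \bmod p) \in [0,p)^{d+1}, \qquad t = 0,1,\ldots,p-1.
$$
Fix a constant $\gamma > 2$ (for example $\gamma = 3$) and partition each coordinate axis $[0,p)$ into $\lceil p/(\gamma T_i)\rceil$ subintervals of length at most $\gamma T_i$. This induces a partition of $[0,p)^{d+1}$ into cells, and the goal is to show that the cell count is strictly less than $p$, so that pigeonhole forces a coincidence.

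To count the cells, I would split the indices into $J = \{i : \gamma T_i < p\}$ and its complement $I$. For $i \in I$ a single subinterval covers $[0,p)$, while for $i \in J$ one has $\lceil p/(\gamma T_i)\rceil \le 2p/(\gamma T_i)$ since $p/(\gamma T_i) \ge 1$. Combining the trivial bound $T_i < p$ for $i \in I$ with the hypothesis $T_0\cdots T_d > p^d$ gives $\prod_{i\in J} T_i > p^{|J|-1}$, whence the total number of cells is at most
$$
\prod_{i \in J} \frac{2p}{\gamma T_i} = \(\frac{2}{\gamma}\)^{|J|} \cdot \frac{p^{|J|}}{\prod_{i\in J} T_i} < \(\frac{2}{\gamma}\)^{|J|} p,
$$
which is strictly less than $p$ as soon as $|J| \ge 1$ and $\gamma > 2$.

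The pigeonhole principle now supplies distinct $t_1, t_2 \in \{0,\ldots,p-1\}$ with $\mathbf{v}_{t_1}$ and $\mathbf{v}_{t_2}$ in a common cell. Setting $t = t_1 - t_2$ gives $0 < |t| < p$, so $\gcd(t,p) = 1$, and each coordinate-wise separation yields $\rho(a_i t) \le \gamma T_i = O(T_i)$, as required. The edge case $J = \emptyset$, i.e.\ $T_i \ge p/\gamma$ for every $i$, is immediate: any $t \in \{1,\ldots,p-1\}$ works, since then $\rho(a_i t) \le p/2 \le (\gamma/2) T_i$.

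The main technical point is merely constant-tracking: the hypothesis $T_0 \cdots T_d > p^d$ is strict only by a unit factor, so I must extract multiplicative slack from choosing $\gamma > 2$ in the ceiling estimates in order to obtain a strict inequality in the cell count. This slack is precisely what reappears as the implicit constant in the conclusion $\rho(a_i t) \ll T_i$, and is also what allows the trivial-case fallback when all $T_i$ are of order $p$.
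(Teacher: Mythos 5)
Your proof is correct and is exactly the standard Dirichlet pigeonhole argument that the paper invokes (it cites the references rather than reproving the lemma): box the $p$ points $(a_0t,\ldots,a_dt) \bmod p$ into fewer than $p$ cells of side $O(T_i)$ and difference the colliding pair, with the hypothesis $T_0\cdots T_d>p^d$ together with $T_i<p$ ensuring the cell count stays below $p$. The constant-tracking via $\gamma>2$ and the separate treatment of the degenerate case $J=\emptyset$ are handled correctly, so there is nothing to add.
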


\subsection{Moments of short character sums}

We recall the classical result of Davenport \&
Erd{\H o}s~\cite{DavErd}, which 
 follows from the Weil bound of multiplicative character sums,
see~\cite[Theorem~11.23]{IwKow}.

\begin{lemma}
\label{lem:DavErd} 
For a fixed integer $\nu \ge 1$ and a positive 
integer $K < p$, we have
$$
 \sum_{\lambda \in \F_p}  
 \left|\sum_{k =1}^K
\chi\(\lambda + r\)\right|^{2\nu} \ll K^{2\nu} p^{1/2} + K^\nu p.
$$
\end{lemma}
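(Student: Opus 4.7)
The plan is to expand the $2\nu$-th moment directly and reduce to incomplete character sums over polynomial arguments, where Weil's bound can be applied. Interpreting the inner summand as $\chi(\lambda+k)$, opening the absolute value and swapping the order of summation gives
$$
\sum_{\lambda \in \F_p}\biggl|\sum_{k=1}^K \chi(\lambda+k)\biggr|^{2\nu}
= \sum_{\vec k,\vec \ell \in [1,K]^{\nu}} \sum_{\lambda \in \F_p} \chi\bigl(P_{\vec k}(\lambda)\bigr)\overline{\chi}\bigl(Q_{\vec \ell}(\lambda)\bigr),
$$
where $P_{\vec k}(X) = \prod_{i=1}^{\nu}(X+k_i)$ and $Q_{\vec \ell}(X) = \prod_{j=1}^{\nu}(X+\ell_j)$. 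The lemma thus reduces to estimating each inner sum and counting how many pairs $(\vec k,\vec \ell)$ fall into each regime.

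Next, I would split the pairs $(\vec k,\vec \ell)$ into \emph{degenerate} and \emph{non-degenerate} ones, letting $d$ be the order of $\chi$ and calling a pair degenerate if, for every $a\in[1,K]$, the multiplicities of $a$ in $\vec k$ and in $\vec \ell$ agree modulo $d$. For a degenerate pair the product $\chi(P_{\vec k}(\lambda))\overline{\chi}(Q_{\vec \ell}(\lambda))$ is constantly equal to $1$ outside at most $2\nu$ values of $\lambda$, so the inner sum equals $p+O(1)$; a routine combinatorial count (choose the underlying multiset of $\vec k$ in at most $K^\nu$ ways, then note that the matching constraint leaves $O_{\nu}(1)$ compatible $\vec \ell$) bounds the number of degenerate pairs by $O(K^\nu)$, giving a total contribution of $O(K^\nu p)$. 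For the remaining, non-degenerate pairs the rational function $P_{\vec k}(X)Q_{\vec \ell}(X)^{d-1}$ is not a constant times a $d$-th power in $\F_p[X]$, so the Weil bound as stated in~\cite[Theorem~11.23]{IwKow} yields an inner-sum estimate of $O_\nu(p^{1/2})$, and there are trivially at most $K^{2\nu}$ such pairs, contributing $O(K^{2\nu}p^{1/2})$.

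Adding the two contributions produces the claimed bound $O(K^{2\nu}p^{1/2}+K^\nu p)$. The one step that requires any care is verifying that the non-degenerate pairs actually satisfy the hypothesis of Weil's theorem; fortunately this is purely formal, since the factorization of $P_{\vec k}$ and $Q_{\vec \ell}$ into distinct linear factors in $\F_p[X]$ makes the condition ``$P_{\vec k}\,Q_{\vec \ell}^{d-1}$ is a constant times a perfect $d$-th power'' equivalent to the multiplicity congruence that defined degeneracy. No further arithmetic input is needed, and since $\nu$ is fixed throughout the implied $O_\nu$-constants can be absorbed into the stated $\ll$.
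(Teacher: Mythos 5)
The paper offers no proof of this lemma at all --- it cites Davenport--Erd\H{o}s and remarks that the bound ``follows from the Weil bound of multiplicative character sums''~\cite[Theorem~11.23]{IwKow} --- and your argument is precisely the standard derivation being alluded to: expand the $2\nu$-th moment, isolate the tuples for which $P_{\vec k}Q_{\vec \ell}^{\,d-1}$ is (a constant times) a $d$-th power, and apply the Weil bound to the remaining $O(K^{2\nu})$ tuples. Your reduction of the $d$-th power condition to the multiplicity congruence is also correct, since all the roots $-k_i$, $-\ell_j$ lie in $\F_p$ and are distinct for distinct values in $[1,K]$ (here $K<p$ is used).

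One parenthetical justification is inaccurate, although the count it is meant to support is true. For a fixed $\vec k$ the number of compatible $\vec \ell$ need \emph{not} be $O_\nu(1)$ when the order $d$ of $\chi$ is small: take $\chi$ quadratic, $\nu=2$ and $\vec k=(a,a)$; then every $\vec \ell=(b,b)$ is compatible, so there are about $K$ choices of $\vec\ell$, not $O(1)$. The total number of degenerate pairs is nevertheless $O_\nu(K^{\nu})$, by the following slightly more careful count. Let $D$ be the set of values occurring in both tuples, and let $B'$ (respectively $C'$) be the set of values occurring only in $\vec k$ (respectively only in $\vec\ell$). The congruences $m_a\equiv n_a \pmod d$ force $m_a\ge d$ for $a\in B'$ and $n_a\ge d$ for $a\in C'$, so $|D|+d|B'|\le \nu$ and $|D|+d|C'|\le \nu$; since $d\ge 2$, adding these gives $|D|+|B'|+|C'|\le \nu$. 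Hence the values involved can be chosen in at most $K^{\nu}$ ways, after which there are only $O_\nu(1)$ choices of multiplicities and orderings. With this repair the contribution $O(K^{\nu}p)$ from the degenerate pairs, and therefore the lemma, follows as you state.
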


\subsection{Congruences with uniformly distributed sequences}
\label{sec:u-distr cong}

We say that a sequence $\cR = \{r_u\}_{u=1}^U$ of  $U$ elements of $\F_p$  is  $\eta$-uniformly distributed 
modulo $p$ if uniformly over  $b \in \F_p$ and for any positive integer $Z <p$
\begin{equation}
\label{eq:eta distr}
\begin{split}
\#\{(u, z) \in [1,N] \times [1,Z]~:~ r_u  \equiv b &+ z \pmod p\}\\
&= \frac{U  Z}{p} + O(\# \cR p^{-\eta}). 
\end{split}
\end{equation}

We now establish a more general form of~\cite[Lemma~2.9]{ChanShp}, 
which in turn is based on some ideas and results 
of Bourgain,  Konyagin and  Shparlinski~\cite{BKS} and Shao~\cite{Shao}.

\begin{lemma}
\label{lem:N bound}  For any $\eta>0$ there exist some $\kappa>0$ such that 
for positive integers $L < V < p^{1/2-\eta}$  and  any $\eta$-uniformly distributed modulo $p$  sequence 
 $\cR = \{r_u\}_{u=1}^U$ of  $U$ elements of $\F_p$,   for 
 \begin{equation*}
%\label{eq:def N}
\begin{split}
N  =\# \Bigl\{(\ell_1,\ell_2,u_1,u_2,v_1,v_2,)&\in \cL^2 \times [1, U]^2 \times [1, V]^2~:\\
&~
\frac{v_1+r_{u_1}}{\ell_1} \equiv \frac{v_2+r_{u_2}}{\ell_2} \pmod p\Bigr\},
\end{split}
\end{equation*}
where $\cL$  is the set of primes of the interval $[L, 2L]$, 
we have
$$
N\ll  L U^2 Vp^{-\kappa} .
$$
\end{lemma}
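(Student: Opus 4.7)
The plan is to split $N$ according to whether $\ell_1 = \ell_2$ or $\ell_1 \neq \ell_2$, and to treat each piece by Fourier analysis combined with the $\eta$-equidistribution hypothesis. For the diagonal $\ell_1 = \ell_2 = \ell$, the condition reduces to $v_1 - v_2 \equiv r_{u_2} - r_{u_1} \pmod p$; applying the $\eta$-uniform distribution to intervals of the form $r_{u_1} + [-V, V]$ bounds the number of eligible $(u_1, u_2)$ by $O(U^2 V/p + U^2 p^{-\eta})$, and multiplying by the at most $V$ admissible pairs $(v_1, v_2)$ per such pair and by the $|\cL| \ll L$ choices of $\ell$ yields a diagonal contribution of order $LU^2 V^2/p + LU^2 V p^{-\eta}$, which is safely within the target.

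For the off-diagonal case I would open the congruence by orthogonality of additive characters, writing
\begin{equation*}
N = \frac{1}{p}\sum_{a \in \F_p}\biggl|\sum_{\ell \in \cL}\sum_{u=1}^{U}\sum_{v=1}^{V}\ep(a\ell(v+r_u))\biggr|^2.
\end{equation*}
The $a = 0$ term contributes $|\cL|^2U^2V^2/p \ll LU^2Vp^{-2\eta}$ via $LV < p^{1-2\eta}$. For $a \neq 0$, factor the inner triple sum as $\sum_\ell E_v(a\ell)E_r(a\ell)$ with $E_v(b) = \sum_v \ep(bv)$ and $E_r(b) = \sum_u \ep(br_u)$. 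The classical estimate $|E_v(b)| \ll \min(V, p/\|b\|_p)$ combined with Parseval gives $\sum_b \min(V, p/\|b\|_p)^2 \ll pV$, while the Erd\H{o}s--Tur\'an--Koksma inequality translates the $\eta$-equidistribution of $\cR$ into a Fourier bound $|E_r(b)| \ll Up^{-\eta_0}$ for $b \not\equiv 0 \pmod p$, for some $\eta_0 = \eta_0(\eta) > 0$.

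The main obstacle is arranging the Cauchy--Schwarz over $(\ell_1, \ell_2) \in \cL^2$ without paying a spurious factor of $L$ relative to the target. Following the strategy of~\cite{BKS} and~\cite{Shao}, I would exploit the multiplicative richness of the prime set $\cL$: the ratios $\ell_1\ell_2^{-1}$ for $\ell_1 \neq \ell_2$ are distinct and spread throughout $\F_p^*$, so that the correlations $\sum_\mu g(\mu)g(c\mu)$ of the representation function $g(\mu) = \#\{(u,v) : v + r_u \equiv \mu \pmod p\}$ are small on average over $c$ in this ratio set. A sum-product type estimate in $\F_p$, made available by the hypothesis $V < p^{1/2 - \eta}$, then supplies the additional saving of $p^{-\kappa_1}$ needed to compensate for the factor of $L$, yielding $N \ll LU^2Vp^{-\kappa}$ for some $\kappa = \kappa(\eta) > 0$.
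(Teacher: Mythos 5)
Your treatment of the diagonal $\ell_1=\ell_2$ and of the zero frequency $a=0$ is correct, and the orthogonality identity together with the factorisation $\sum_{\ell}E_v(a\ell)E_r(a\ell)$ and the Parseval bound $\sum_b|E_v(b)|^2=pV$ are all fine. But the off-diagonal contribution, which is the entire content of the lemma, is not established, for two reasons. First, the step asserting that the Erd{\H o}s--Tur\'{a}n--Koksma inequality converts the $\eta$-uniform distribution of $\cR$ into a pointwise Weyl-sum bound $|E_r(b)|\ll Up^{-\eta_0}$ for all $b\not\equiv 0\pmod p$ is false: Erd{\H o}s--Tur\'{a}n deduces equidistribution \emph{from} Weyl-sum decay, and the converse implication fails. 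For instance, take $m=\fl{\sqrt p}$, $U=m$ and $r_u=um$, $u=1,\dots,m$; every interval of length $Z$ contains $UZ/p+O(1)$ of these values, so $\cR$ is $\eta$-uniformly distributed in the sense of~\eqref{eq:eta distr} for every $\eta<1/2$, yet for $b\equiv m^{-1}\pmod p$ one has $E_r(b)=\sum_{u=1}^m\ep(u)\gg U$ because all the phases $u/p$ lie in $(0,p^{-1/2}]$. The hypothesis~\eqref{eq:eta distr} gives no pointwise control of $E_r$, so your estimate for $a\ne 0$ has no starting point. Second, even granting such a bound, you correctly identify that Cauchy--Schwarz over $\cL^2$ costs a factor of $L$ (which may be as large as $V\approx p^{1/2-\eta}$), but the proposed remedy --- an appeal to the ``multiplicative richness'' of $\cL$ and an unspecified ``sum-product type estimate'' --- is a hope rather than an argument: no concrete statement is invoked and no saving is actually derived. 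That saving is precisely what the lemma asserts.

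For comparison, the paper's proof makes no use of Fourier decay of $\cR$ at all. The hypothesis~\eqref{eq:eta distr} is used only combinatorially: one greedily partitions $\{1,\dots,U\}$ into $O(Up^{-\eta/2})$ index sets on each of which $\{r_u\}$ is $V$-separated (so that the corresponding values $v+r_u$, $v\in[1,V]$, form a union of at least $p^{\eta/2}$ pairwise disjoint intervals of length $V$), together with a leftover set of size $O(Up^{-\eta/2})$ whose contribution is negligible. The congruence is then handled, pair of blocks by pair of blocks, via the multiplicative-energy bounds for sets of the form $\cL\cdot(\text{union of intervals})$ originating in~\cite{BKS} and~\cite{Shao}, as packaged in~\cite[Lemma~2.9]{ChanShp}; this is where the condition $LV<V^2<p^{1-2\eta}$ and the primality of the elements of $\cL$ enter. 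To rescue your route you would need to replace the invalid Weyl-sum step by structural input of exactly this kind.
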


\begin{proof}  We say that a sequence $t_1, \ldots, t_n \subseteq \F_p$ is $V$-spaced
if no  integers $1\le i < j \le n $ with $i\ne j$ and an  integer $v$ with $|v|\le V$
satisfy the equality $t_i+v=t_j$.

Let $\cS_1$ be the set of indices of  the largest $V$-separated subsequence 
of $\cU_0 = \{1, \ldots, U\}$. 

Since $\cR$ is $\eta$-uniformly distributed modulo $p$, we conclude that 
for some constant $C$, 
depending only on the implied constant in~\eqref{eq:eta distr}, each interval 
$[b,b+Z] \in [1,p-1]$ of length $Z \ge C p^{1-\eta} \ge V$ contains an element 
of $\cR$.  So, covering $[1,p]$ 
by non-overlapping intervals of length $\rf{C p^{1-\eta}}$ and choosing 
an element of the sequence $\cR$ in every second interval (except possibly the
last one) we see that $\# \cS_1 \gg p^{\eta}$.

We now set $\cU_1= \cU_0\setminus \cS_1$ to be set of indices  of remaining 
elements of $\cR_0$ and proceed inductively,
defining  $\cS_{k+1}$ as the largest set of indices of a 
$V$-separated subsequence of the sequence $\{r_u\}_{u\in \cU_k}$, where
$$
\cU_{k} = \cU_{k-1} \setminus  \cS_k, 
\qquad k =1, 2, \ldots.
$$
We terminate, when $\cS_{k+1} \le p^{\eta/2}$ and
then use $K$ to denote this value of $k$ and also set $\cS_0 = \cU_k$.

Hence there is a partition 
$$
\cU_0 = \bigcup_{k=0}^{K} \cS_k 
$$
into $K \le  U p^{-\eta/2}$  disjoined subsets,
where  $\{r_u\}_{u \in \cS_k}$ is a $V$-separated 
sequence with  $\# \cS_k \ge p^{\eta/2}$, 
$k=1, \ldots, K$.

We claim that 
\begin{equation}
\label{eq:S0 small}
\# \cS_{0} \ll U p^{-\eta/2}.
\end{equation}
Indeed, let $N$ be the smallest number of 
intervals of length $V$ that covers the set 
 $\{r_u\}_{u \in \cS_{0}}$. 
Clearly $\{r_u\}_{u \in \cS_{0}}$ contains a $V$-separated set 
of size $\rf{N/2}$. Hence $N \ll p^{\eta/2}$.
Since $\{r_u\}_{u \in \cS_{0}}$ is a subsequence 
of the sequence $\cR$, we
see that each of such intervals in this covering 
contains at most 
$UV/p + O(U p^{-\eta}) \ll U p^{-\eta}$ elements of 
$\cR$ and therefore of  $\{r_u\}_{u \in \cS_{0}}$. 
Therefore $\# \cS_{0} \ll N U  p^{-\eta}$ and~\eqref{eq:S0 small}
follows. 
The rest of the proof is identical to that of~\cite[Lemma~2.9]{ChanShp}.
\end{proof}

\subsection{Character sums with  uniformly distributed sequences}
\label{sec:u-distr char}

We now use the argument of the proof~\cite[Theorem~1.1]{ChanShp} to estimate certainly 
double character sums with $\eta$-uniformly distributed sequences 
modulo $p$ (as defined in Section~\ref{sec:u-distr cong}.

We use the same notatio a convex set 
$\fC \subseteq [1,U]\times [1,V]$ and weights 
$\cA $  as in Section~\ref{sec:intro}.

\begin{lemma}
\label{lem:CharSum bound}  For any $\eta>0$ there exist some $\delta>0$ such that 
for $V \ge p^{1/4-\delta}$  any $\eta$-uniformly distributed
modulo $p$   sequence 
 $\cR = \{r_u\}_{u=1}^U$ of  $U$ elements of $\F_p$, we have
$$
 \sum_{(u,v)\in \fC}  \alpha_u  
   \chi(v+r_u) \ll UVp^{-\delta}.
 $$
\end{lemma}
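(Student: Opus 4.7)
My plan is to adapt the Burgess-type argument from the proof of~\cite[Theorem~1.1]{ChanShp}, feeding Lemma~\ref{lem:N bound} in as the source of cancellation and Lemma~\ref{lem:DavErd} as the character-moment input.

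First, since $\fC$ is convex, for each $u$ the slice $\cI_u=\{v:(u,v)\in\fC\}$ is an interval. I introduce two parameters $L$ and $M$, each a small power of $p$ with $L<V$, and let $\cL$ be the set of primes in $[L,2L]$, so that $\#\cL\gg L/\log p$. For each $\ell\in\cL$ and $m\in[1,M]$, replacing $\cI_u$ by its shift $\cI_u-\ell m$ affects only $O(LM)$ terms, so periodicity of $\chi$ yields
$$
\sum_{v\in\cI_u}\chi(v+r_u)=\sum_{v\in\cI_u}\chi(v+\ell m+r_u)+O(LM).
$$
Averaging over $(\ell,m)$ and using the factorisation $v+\ell m+r_u=\ell\bigl(m+(v+r_u)\ell^{-1}\bigr)$, which gives $\chi(v+\ell m+r_u)=\chi(\ell)\chi\bigl(m+(v+r_u)\ell^{-1}\bigr)$, reduces the estimation of $S$ to
$$
|S|\ll\frac{1}{\#\cL\cdot M}\Bigl|\sum_{\lambda\in\F_p}T(\lambda)\Psi(\lambda)\Bigr|+ULM,
$$
where $\Psi(\lambda)=\sum_{m=1}^M\chi(m+\lambda)$ and
$$
T(\lambda)=\sum_u\alpha_u\sum_{v\in\cI_u}\sum_{\ell\in\cL}\chi(\ell)\,\mathbf{1}\bigl[(v+r_u)\ell^{-1}\equiv\lambda\pmod{p}\bigr].
$$

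Next, I fix an integer $\nu\ge 1$ and apply Lyapunov interpolation of $\|T\|_{2\nu/(2\nu-1)}$ between $\|T\|_1$ and $\|T\|_2$, followed by Hölder's inequality with exponent pair $\bigl(2\nu/(2\nu-1),2\nu\bigr)$, to obtain
$$
\Bigl|\sum_\lambda T(\lambda)\Psi(\lambda)\Bigr|\le\Bigl(\sum_\lambda|T(\lambda)|\Bigr)^{1-1/\nu}\Bigl(\sum_\lambda|T(\lambda)|^2\Bigr)^{1/(2\nu)}\Bigl(\sum_\lambda|\Psi(\lambda)|^{2\nu}\Bigr)^{1/(2\nu)}.
$$
Trivially $\sum_\lambda|T(\lambda)|\ll LUV$. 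Since $|\alpha_u\chi(\ell)|\le 1$, the second factor is majorised by the counting quantity $N$ of Lemma~\ref{lem:N bound}, and hence by $LU^2Vp^{-\kappa}$, provided that $V<p^{1/2-\eta}$ (the remaining range $V\ge p^{1/2-\eta}$ can be handled separately by a direct Burgess estimate applied to the inner sum $\sum_{v\in\cI_u}\chi(v+r_u)$). The third factor is controlled by Lemma~\ref{lem:DavErd}, which gives $\bigl(M^{2\nu}p^{1/2}+M^\nu p\bigr)^{1/(2\nu)}$.

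Finally, I substitute these three estimates and optimise. Writing $L=p^\alpha$, $M=p^\beta$ and $V=p^\gamma$ with $\gamma\ge 1/4-\delta$, the resulting bound takes the shape $UVp^{-\Delta}$, where $\Delta$ is an explicit linear combination of $\alpha,\beta,\gamma,\kappa$ and $1/\nu$; taking $\alpha$ just below $\gamma$, $\beta$ a small positive constant, and $\nu$ large enough in terms of $\eta$, I can make $\Delta\ge\delta>0$ for some $\delta$ depending only on $\eta$. The hard part will be the balance at the Burgess threshold $V\sim p^{1/4}$: the only nontrivial gain comes from the factor $p^{-\kappa/(2\nu)}$, which must simultaneously absorb the character-moment loss $p^{1/(4\nu)}$ (forcing $\nu$ large) and the shift error $ULM$ (forcing $L,M$ small). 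Because $\kappa$ depends only on $\eta$, all of these constraints can be met, and some positive $\delta$ depending only on $\eta$ works.
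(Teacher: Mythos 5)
Your proposal is correct and follows essentially the same route as the paper: the same shift by $\ell m$ with $\ell$ prime in $[L,2L]$, the same factorisation $\chi(v+\ell m+r_u)=\chi(\ell)\chi\bigl(m+(v+r_u)\ell^{-1}\bigr)$, the same three-factor H\"older splitting of the counting function (with Lemma~\ref{lem:N bound} bounding the second moment and Lemma~\ref{lem:DavErd} the character moment), and the same separate treatment of large $V$ via Burgess. The only difference is that you leave the final parameter choice implicit, whereas the paper fixes $\gamma=\kappa/5$, $K=\lceil p^{\gamma}\rceil$, $L=Vp^{-2\gamma}$ and $\nu=\lceil\gamma^{-1}\rceil$; the constraints you identify are exactly the ones these choices satisfy.
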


\begin{proof}  
We note that 
since $\fC$ is convex,  for each $u=1, \ldots, U$, there  are
 integers $V \ge Y_u \ge X_u \ge 0$ such that 
$$
\sum_{(u,v)\in \fC}  \alpha_u  
   \chi(v+r_u)=
\sum_{u =  1}^{U} \alpha_u  \sum_{v=X_u+1}^{Y_u}  \chi(v+r_u).
$$
Clearly we can assume that $V< p^{1/3}$ as otherwise the Burgess bound 
(see~\cite[Theorem~12.6]{IwKow}) implies the desired result. 
Hence, assuming that $\eta$ is small enough we see that  
the conditions of Lemma~\ref{lem:N bound}  are satisfied 
for the sequence $\cR$. 

For $\kappa$ that corresponds the above value of $\eta$ in 
Lemma~\ref{lem:N bound}, we set 
$$
\gamma = \kappa/5.
$$ 

Let $K =   \rf{p^{\gamma}}$ and $L = Vp^{-2\gamma}$. 
Also, as in Lemma~\ref{lem:N bound} we use $\cL$ 
to denote the set of primes of the interval $[L, 2L]$. 

Thus for any integer $w$ we have 
\begin{equation*}
\begin{split}
\sum_{(u,v)\in \fC}  \alpha_u  
   \chi(v+r_u) &=
\sum_{u =  1}^{U} \alpha_u \(\sum_{v=X_u+1}^{Y_u}  \chi(v+r_u + w) + O(|w|)\)\\
& = \sum_{(u,v)\in \fC} \alpha_u  \chi(v+r_u + w) +O(U|w|).
\end{split}
\end{equation*}
Therefore 
\begin{equation}
\begin{split}
\label{eq:Sigma}
\sum_{(u,v)\in \fC}  \alpha_u  
   \chi(v+r_u)& = \frac{1}{K \#\cL}
\fT  + O(K L U) \\
& \ll  \frac{1}{KL} \fT \log p+ O(VUp^{-\gamma}),
\end{split}
 \end{equation}
where
$$
\fT   = \sum_{(u,v)\in \fC} \sum_{\ell \in \cL} \sum_{k =1}^K
 \chi(v+r_u +  k \ell) .
$$
Hence 
\begin{equation*}
\begin{split}
\fT  & \le   \sum_{(u,v)\in \fC}  
\sum_{\ell \in \cL} \left|\sum_{k =1}^K
  \chi(v+r_u +  k \ell)\right|\\
& \le   \sum_{u=1}^U \sum_{v=1}^V  
\sum_{\ell \in \cL}\left| \sum_{k =1}^K
 \chi(v+r_u +  k \ell)\right|\\
& =  \sum_{u=1}^U \sum_{v=1}^V  
\sum_{\ell \in \cL} \left|\sum_{k =1}^K
  \chi\(\frac{v+r_u}{\ell}+  k\)\right|. 
\end{split}
\end{equation*}
Collecting together the triples $(u,v,\ell)$ 
with the same value $(v+r_u)/\ell$,  we obtain 
$$
\fT   \le  \sum_{\lambda  \in \F_p} I(\lambda) \left|\sum_{k =1}^K
  \chi\(\lambda +  k\)\right|
$$
where 
$$
 I(\lambda)  =\# \left\{(\ell,u,v)\in \cL  \times [1,U] \times [1,V] ~:~
\frac{v+r_u}{\ell} \equiv \lambda \pmod p\right\}.
$$

We now fix some integer $\nu \ge 1$. 
Writing 
$$I(\lambda)  =  I(\lambda) ^{(\nu-1)/\nu} ( I(\lambda) ^2)^{1/2\nu}
$$
and using the H{\"o}lder inequality, we derive
\begin{equation*}
\begin{split}
\fT ^{2\nu} & = \( \sum_{\lambda  \in \F_p}  I(\lambda) \)^{2\nu - 2}
\sum_{\lambda  \in \F_p}  I(\lambda) ^2 
 \sum_{\lambda  \in \F_p}  
 \left|\sum_{k =1}^K
\chi\(v + k\)\right|^{2\nu}. 
\end{split}
\end{equation*}
We obviously have 
\begin{equation}
\label{eq:I1}
\sum_{\lambda  \in \F_p}  I(\lambda)  \ll  \# \cL UV \ll LUV.
 \end{equation}
%Note that each value in the set $\cR = \{r_u~:~u\in [1, U]\}$ is taken 
%at most $d$ times. 
Hence, using Lemma~\ref{lem:N bound},  we obtain
\begin{equation}
\label{eq:I2}
\sum_{\lambda  \in \F_p}  I(\lambda) ^2 = N \le  LU^2V p^{-\kappa}
\end{equation}
for some $\kappa >0$ depending only on $\eta$ and thus only on $\varepsilon$.

Furthermore, taking $\nu = \rf{\gamma^{-1}}$, we derive
from  Lemma~\ref{lem:DavErd} that 
\begin{equation}
\label{eq:CharMoment nu}
\sum_{\lambda  \in \F_p}  
 \left|\sum_{k =1}^K
\chi\(v + k\)\right|^{2\nu} \ll K^{2\nu} p^{1/2} + K^\nu p \ll  K^{2\nu} p^{1/2}.
\end{equation}
Collecting the bounds~\eqref{eq:I1},  \eqref{eq:I2}
and~\eqref{eq:CharMoment nu}, we obtain
\begin{equation}
\label{eq:Sigma nu}
\begin{split}
\fT ^{2\nu} & \ll (LUV)^{2\nu-2}    LU^2V K^{2\nu}   p^{1/2-\kappa}\\
& =  (KLUV)^{2\nu}  (LV)^{-1}p^{1/2-\kappa}.
\end{split}
 \end{equation}
So if  $\delta \le \kappa/4$, we see that 
$$
 (LV)^{-1}p^{1/2-\kappa} = V^{-2} p^{1/2-3\kappa/5}\le p^{-\kappa/10}.
$$
Hence we infer from~\eqref{eq:Sigma nu} that 
$\fT  \ll  (KLUV) p^{-\kappa/20\nu}$. Substituting this inequality  
in~\eqref{eq:Sigma} and 
taking any
$$
\delta < \min\{\kappa/4, \kappa/20\nu, \gamma\}
= \kappa/20\nu, 
$$
we conclude  the proof.
\end{proof}

\section{Polynomial Congruences}

\subsection{Additive congruences with reciprocals}
\label{sec:cong recip}

We define $J_{d,k}(a,b;T)$ as the number of solutions to the 
congruence
$$
\sum_{j=1}^{2k}  \frac{(-1)^j}{(at_j+b)^d} \equiv 0 \pmod p, \qquad
1 \le t_1, \ldots, t_{2k} \le T.
$$
First we recall 
 the following result Bourgain \& Garaev~\cite[Proposition~1]{BouGar2}
(or~\cite[Theorem~1]{BouGar2} if $d=1$). 

\begin{lemma}
\label{lem:Jdk} For any fixed integer $k \ge 1$,  for $1 \le T< p$,  we have
$$
J_{d,k}(a,b;T) \le T^{2k+o(1)} p^{-1} + T^{2k^2/(k+1)+o(1)}. 
$$
\end{lemma}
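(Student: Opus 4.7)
The plan is to treat $J_{d,k}(a,b;T)$ as a $k$-fold additive energy in $\F_p$ and handle it via a Fourier/moment expansion followed by a Dirichlet-type pigeonhole reduction. Begin by unfolding the congruence indicator through additive characters:
$$
J_{d,k}(a,b;T) = \frac{1}{p}\sum_{\lambda \in \F_p}\left|\sum_{t=1}^{T} \e_p\!\left(\frac{\lambda}{(at+b)^d}\right)\right|^{2k}.
$$
The contribution of $\lambda = 0$ is exactly $T^{2k}/p$, which accounts for the first summand in the claimed bound. Hence the task reduces to bounding the contribution from $\lambda \ne 0$ by $T^{2k^2/(k+1) + o(1)}$; equivalently, one bounds the $k$-th additive energy in $\F_p$ of the set $\cY = \{(at+b)^{-d} : 1 \le t \le T\}$ beyond its random term.

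For the non-trivial frequencies I would clear denominators to rewrite the congruence as
$$
F(t_1,\ldots,t_{2k}) := \sum_{j=1}^{2k} (-1)^j \prod_{i\ne j} (at_i+b)^d \equiv 0 \pmod p,
$$
a polynomial relation in $2k$ variables of total degree $d(2k-1)$. To pass from a congruence modulo $p$ to an honest integer relation, apply Lemma~\ref{lem:Reduction} to produce a dilation $\tau \in \F_p^*$ that simultaneously drags several relevant multiples into a small residue window; after this reduction the terms $\tau (at_j+b)^{-d}$ (or related quantities) become small integers, and the mod $p$ identity becomes an identity in $\Z$ up to a controllable defect. The number of $2k$-tuples $(t_1,\ldots,t_{2k}) \in [1,T]^{2k}$ satisfying this integer relation is then controlled by the geometry of the variety $\{F=0\}$, typically via a Schwartz--Zippel or fibration argument in the last $k+1$ coordinates.

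The main obstacle is the choice of the window sizes $T_0,\ldots,T_d$ in Lemma~\ref{lem:Reduction} so as to achieve the sharp exponent $2k^2/(k+1) = 2k - 2k/(k+1)$: the bound comes from balancing the loss $\prod_i T_i^{-1}$ in the Dirichlet reduction against the count of integer solutions to $F=0$ with the $t_j$'s lying in the resulting boxes, and the optimal balance produces the $1/(k+1)$ saving in the exponent. A secondary difficulty is handling degenerate configurations -- those in which many $t_j$ coincide, or on which $F$ vanishes identically as a polynomial in some subset of the variables -- which must be enumerated separately and checked to fit under $T^{2k^2/(k+1)+o(1)}$. The whole framework is parallel to the treatment of additive energies for other polynomial images developed in~\cite{CillGar} and~\cite{CSZ}, which supply the template for both the reduction step and the balancing.
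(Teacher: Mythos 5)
This lemma is not proved in the paper at all: it is quoted verbatim from Bourgain and Garaev~\cite[Proposition~1]{BouGar2} (and \cite[Theorem~1]{BouGar2} for $d=1$), so the only thing to check is whether your from-scratch argument actually establishes it. It does not.

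Your opening step is fine: orthogonality gives
$$
J_{d,k}(a,b;T)=\frac{1}{p}\sum_{\lambda\in\F_p}\Bigl|\sum_{t=1}^{T}\ep\bigl(\lambda/(at+b)^d\bigr)\Bigr|^{2k},
$$
and the term $\lambda=0$ contributes exactly $T^{2k}/p$. But everything after that is the entire content of the lemma, and the tools you propose cannot deliver it. First, Lemma~\ref{lem:Reduction} rescales a \emph{fixed} set of $d+1$ residues $a_0,\ldots,a_d$ into prescribed windows; here you would need a single $\tau$ making all $2k$ quantities $\tau(at_j+b)^{-d}$ small simultaneously for \emph{every} choice of $t_1,\ldots,t_{2k}\in[1,T]$, i.e.\ for $T$ distinct residues at once, which is not what the pigeonhole lemma provides (and even for $2k$ fixed residues the constraint $T_0\cdots T_{2k-1}>p^{2k-1}$ forces each window to have size about $p^{1-1/(2k)}$, which is useless). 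The reason this reduction works in Lemma~\ref{lem:NfUZ} is that the congruence there is a \emph{product} $g(u)z\equiv t$, so smallness converts it into a divisibility $z\mid t+sp$ and the divisor bound applies; a signed sum of $2k$ reciprocal powers has no such multiplicative structure. Second, even granting some integer model of the relation $F(t_1,\ldots,t_{2k})=0$, a Schwartz--Zippel or fibration count of points on a hypersurface in a box only determines one coordinate from the others and yields $O(T^{2k-1})$, whereas the claimed exponent $2k^2/(k+1)=2k-1-(k-1)/(k+1)$ is strictly smaller for every $k\ge 2$; so the whole saving of the lemma is exactly what this counting step cannot see. The actual proof in~\cite{BouGar2} goes through sum--product/multiplicative-energy estimates for intervals and an induction on $k$ (with the Cilleruelo--Garaev concentration bound~\cite{CillGar} as an ingredient in the case $k=2$, which is the separate Lemma~\ref{lem:J2} here); none of that machinery appears in your sketch. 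If you do not wish to reproduce that argument, the honest route is to cite \cite[Proposition~1]{BouGar2}, as the paper does.
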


For $d = 1$ we also write $J_{k}(a,b;T) = J_{1,k}(a,b;T)$. Then 
for $k=2$ and $d=1$, the result of  Cilleruelo \& Garaev~\cite[Theorem~1]{CillGar} (with $K=L$), see 
also~\cite[Corollary~10]{BouGar2}, yields

\begin{lemma}
\label{lem:J2} For $1 \le T< p$, we have
$$
J_2(a,b;T) \le T^{7/2+o(1)}p^{-1/2} +T^{2+o(1)}.
$$
\end{lemma}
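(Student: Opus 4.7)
My plan is to reduce $J_2(a,b;T)$ to a ``two-against-two'' reciprocal congruence with arguments in an interval, and then invoke the Cilleruelo--Garaev bound directly.

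First, since $a\in\F_p^*$, I would normalize by writing
$$
\frac{1}{at_j+b}\equiv a^{-1}\cdot\frac{1}{t_j+c}\pmod p,\qquad c\equiv b\,a^{-1}\pmod p.
$$
The common factor $a^{-1}$ then cancels from the defining congruence
$$
\sum_{j=1}^{4}\frac{(-1)^j}{at_j+b}\equiv 0\pmod p,
$$
so that $J_2(a,b;T)$ equals the number of quadruples $(t_1,t_2,t_3,t_4)\in[1,T]^4$ satisfying
$$
\frac{1}{t_1+c}+\frac{1}{t_3+c}\equiv \frac{1}{t_2+c}+\frac{1}{t_4+c}\pmod p.
$$
In particular, the dependence on $(a,b)$ has been reduced to a single shift parameter $c\in\F_p$, and both coordinates on each side range over a translate of $[1,T]$.

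Second, I would apply \cite[Theorem~1]{CillGar} (equivalently, \cite[Corollary~10]{BouGar2}) with $K=L=T$ to this ``two-against-two'' reciprocal congruence. That result bounds the number of such solutions by
$$
T^{7/2+o(1)}p^{-1/2}+T^{2+o(1)},
$$
which is exactly the claim.

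The only real point to check, and the place where I would expect the proof to get stuck if anything is going to, is that the cited estimate is uniform in the shift $c$, i.e.\ that it applies to any translate $\{1+c,\ldots,T+c\}\subset\F_p$ and not just to $\{1,\ldots,T\}$. I expect this to be immediate from the proofs in~\cite{CillGar,BouGar2}, which rely only on the cardinality and the interval (convexity) structure of the underlying set, both invariant under translation modulo~$p$; if a wrap-around occurs modulo $p$ the set can be split into two subintervals and each treated separately, losing only a factor of $2^4$ which is absorbed into the $p^{o(1)}$.
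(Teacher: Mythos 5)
Your proposal is correct and matches the paper, which gives no separate proof of this lemma but simply derives it from \cite[Theorem~1]{CillGar} (with $K=L$), equivalently \cite[Corollary~10]{BouGar2}, exactly as you do. Your normalization $1/(at_j+b)\equiv a^{-1}/(t_j+c)$ reducing to a translated interval, and your remark that the cited bound is uniform over such translates, are the (routine) details the paper leaves implicit.
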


\subsection{Multiplicative congruences with polynomials}
\label{sec:mult cong}

We need some results about the frequency of small values
amongst inverses modulo $p$ of polynomials. 
Namely, for a polynomial $f \in \F_p[X]$ and two positive integers 
$U$ and $Z$ we denote by $N_f(U,Z)$ the number of  solutions to 
the congruence 
\begin{equation}
\label{eq:cong f}
f(u) z \equiv 1 \pmod p, \qquad 1 \le u\le U, \ 1\le z \le Z. 
\end{equation}
Several approaches to estimating the number of solutions
of congruences of this types have recently been considered 
in~\cite{Chang,CCGHSZ,CillGar,CGOS,CSZ,Kerr}. 
For our purpose, the method of the proof of~\cite[Theorem~1]{CillGar}
is the most suitable.

We  now give one of our main technical results, which can be 
of independent interest. 

\begin{lemma}
\label{lem:NfUZ} Uniformly over  polynomials $f(X) \in \F_p[X]$ 
of degree $d\ge 1$, for $1 \le U,Z< p$,  we have 
$$
N_f(U,Z) \le  \( U^{d/2}Z p^{-1/(d+1)} +1\) p^{o(1)}.  
$$
\end{lemma}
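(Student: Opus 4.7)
The idea is to apply the pigeon-hole principle of Lemma~\ref{lem:Reduction} to the coefficients of $f$ in order to pass from the multiplicative congruence $f(u) z \equiv 1 \pmod p$ to a small collection of genuine polynomial equations, and then close by the divisor bound. Write $f(X) = \sum_{i=0}^d a_i X^i$. Choosing
\[
T_i = \lfloor C\, p^{d/(d+1)} U^{d/2 - i} \rfloor, \qquad i = 0, \ldots, d,
\]
with a suitable absolute constant $C$, one verifies $T_0 \cdots T_d > p^d$ and $1 \le T_i < p$ for all $i$ precisely in the regime $U \ll p^{2/(d(d+1))}$. In the complementary regime $U \gg p^{2/(d(d+1))}$ the target bound follows from the trivial count $N_f(U,Z) \le dZ$, since for each $z \in [1,Z]$ the congruence $f(u) \equiv z^{-1} \pmod p$ has at most $d$ solutions $u \in \F_p$, and in that range $U^{d/2}Z p^{-1/(d+1)} \gg Z$.

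In the main regime, Lemma~\ref{lem:Reduction} supplies $t$ with $\gcd(t,p) = 1$ and $|b_i| \ll T_i$, where $b_i \in (-p/2, p/2]$ is the least absolute representative of $t a_i \pmod p$. Multiplying the congruence by $t$ yields $B(u) z \equiv t' \pmod p$, where $B(X) = \sum_{i=0}^d b_i X^i$ and $t' \in (-p/2, p/2]$ represents $t \pmod p$; note that $B \not\equiv 0$ (since $f \not\equiv 0$ and $\gcd(t,p) = 1$) and that $t' \ne 0$. Because each term $|b_i| U^i$ is bounded by $C p^{d/(d+1)} U^{d/2}$, we obtain $|B(u) z| \ll p^{d/(d+1)} U^{d/2} Z$, and hence any solution $(u,z)$ satisfies an honest Diophantine equation
\[
B(u) z = t' + kp, \qquad |k| \ll U^{d/2} Z p^{-1/(d+1)} + 1.
\]

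For every such $k$ the integer $t_k := t' + kp$ is nonzero (as $0 < |t'| < p$), and the number of pairs $(u, z) \in [1, U] \times [1, Z]$ with $B(u) z = t_k$ is at most $d\, \tau(|t_k|) \ll p^{o(1)}$: each positive divisor $z$ of $|t_k|$ lying in $[1, Z]$ determines $B(u) = t_k/z$, which, as $B$ has degree at most $d$, admits at most $d$ roots. Summing over the $O(U^{d/2} Z p^{-1/(d+1)} + 1)$ admissible values of $k$ delivers $N_f(U, Z) \ll (U^{d/2} Z p^{-1/(d+1)} + 1) p^{o(1)}$, as required. The principal technical point is calibrating the $T_i$'s: the geometric scaling $T_i \propto U^{-i}$, dictated by Lagrange multipliers applied to $\sum T_i U^i$ subject to $\prod T_i > p^d$, is what simultaneously keeps $|B(u)|$ bounded by $p^{d/(d+1)}U^{d/2}$ on $[1, U]$ and the Dirichlet product condition tight, thereby producing exactly the two contributions $U^{d/2}Z p^{-1/(d+1)}$ and $1$ in the stated bound.
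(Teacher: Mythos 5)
Your proof is correct and follows essentially the same route as the paper: the same geometric calibration $T_i \propto p^{d/(d+1)}U^{d/2-i}$ fed into Lemma~\ref{lem:Reduction}, the same lift of the congruence to the integer equation $g(u)z = t+kp$ with $O(U^{d/2}Zp^{-1/(d+1)}+1)$ values of $k$, and the same divisor-bound finish, with the complementary range handled by the trivial count $N_f(U,Z)\ll Z$ exactly as in the paper. The only cosmetic difference is that you justify the nontriviality of the reduced polynomial via $B\not\equiv 0$, whereas the cleaner observation is that $b_d\ne 0$ (since $a_d\ne 0$ and $\gcd(t,p)=1$), so $B$ has exact degree $d$ and the ``at most $d$ roots'' step is immediate.
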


\begin{proof} Without loss of generality we can assume that 
\begin{equation}
\label{eq:small U}
 U^{d/2} < p^{1/(d+1)}
\end{equation}
as otherwise that bound is weaker than 
the trivial bound $N_f(U,Z)  \ll Z$. 

Let us define positive $T_0, \ldots, T_d$ to satisfy
\begin{equation}
\label{eq:T_i}
T_0 = T_1U = \ldots = T_dU^d \mand T_0\ldots T_d = p^d.
\end{equation}
Clearly the conditions~\eqref{eq:T_i} allows us to find $T_i$
explicitly,  but for us only the common value   $W = T_i U^i$ is
important, $i =0, \ldots, d$.  
It is easy to see that $W^{d+1} = p^d U^{d(d+1)/2}$,
thus
$$
W = p^{d/(d+1)}  U^{d/2}.
$$
One verifies  that~\eqref{eq:small U} guarantees that 
$$
1 \le  WU^{-d} = T_d \le \ldots \le T_0 = W  < p.
$$
Thus  Lemma~\ref{lem:Reduction} applies with the above values
of $T_0, \ldots, T_d$. 

We now write $f(X) = a_dX^d +\ldots + a_1 X + a_0$ and find 
$t\in [1, p-1]$ as in Lemma~\ref{lem:Reduction}. 
We then define $g(X)=b_dX^d +\ldots + b_1 X + b_0$ 
with coefficients satisfying
$$b_i \equiv a_i t  \pmod p \mand |b_i|< p/2, 
\qquad i =0, \ldots, d.
$$

Then~\eqref{eq:cong f} is equivalent to 
\begin{equation}
\label{eq:cong g}
g(u) z \equiv t \pmod p, \qquad 1 \le u\le U, \ 1\le z \le Z, 
\end{equation}
and noticing that 
$$
\rho\(g(u) z\) \le (d+1)WZ
$$
for all $(u,z) \in [1,U]\times [1,Z]$, we see that~\eqref{eq:cong g}
implies  
$$
g(u) z  = t + sp
$$
for some $s = O(WZ)$.  In particular $z \mid t+sp$ and $t+sp \ne 0$.
We now recall the well-known bound
%\begin{equation}
%\label{eq:tau}
$$
\tau(m)  \le m^{o(1)} 
$$
%%\end{equation}
on the number of integer positive divisors $\tau(m)$ 
of an integer $m\ne0$, 
see, for example,~\cite[Theorem~317]{HaWr}.

Hence, for each 
of the $O\(WZ/p + 1\) = O\(  U^{d/2}Z p^{-1/(d+1)} +1\)$ possible values of $k$,  there are 
at most $p^{o(1)}$ possible values for $z$, and for each $z$ 
at most $d$ values  for $u$. The 
result now follows.
\end{proof}

For $d =1$, that is, for a linear polynomial $f(X) = aX+b$,
the bound of  Lemma~\ref{lem:NfUZ} becomes  
$\(U^{1/2}Z p^{-1/2}  +1\) p^{o(1)}$ and  
corresponds to the estimate 
from~\cite{Shp2}.

\subsection{Additive congruences with polynomials}
\label{sec:add cong}

One of our  tools is also the following
very special case of a much more general bound
of Wooley~\cite{Wool}, that applies to polynomials with arbitrary real coefficients.

\begin{lemma}
\label{lem:Wooley}
 Uniformly over  polynomials $f(X) \in \F_p[X]$ 
of degree $d\ge 3$,  for $1 \le U< p$,  we have  
$$
\left|\sum_{u=1}^{U} \ep(f)\right| \ll U  \(U^{-1} + pU^{-d}\)^\sigma
$$
where 
$$
\sigma = \frac{1}{2(d-1)(d-2)}.
$$
\end{lemma}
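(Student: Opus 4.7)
The plan is to deduce the bound as a direct specialization of Wooley's general Weyl-type estimate from~\cite{Wool}. The key observation is that an $\F_p$-additive character sum of a polynomial is literally a classical Weyl sum once one lifts the coefficients of $f$ to the integers in $\{0,\ldots,p-1\}$.

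First I would write $f(X) = a_dX^d + \ldots + a_1X + a_0$ with integer coefficients $a_i \in \{0,\ldots,p-1\}$ and $a_d \ne 0$, so that, setting $e(z)=\exp(2\pi i z)$, the sum to estimate equals
$$
\sum_{u=1}^U e\bigl(F(u)\bigr), \qquad F(X) = \frac{f(X)}{p} \in \R[X],
$$
a standard Weyl sum whose leading coefficient $\alpha = a_d/p$ already comes equipped with the trivial rational approximation $a_d/p$ in lowest terms (denominator $q=p$, since $\gcd(a_d,p)=1$).

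Next I would invoke the relevant minor-arc Weyl-type bound from~\cite{Wool}: for $d \ge 3$, any real polynomial of degree $d$ whose leading coefficient admits a Diophantine approximation $|\alpha - a/q|\le 1/q^2$ with $\gcd(a,q)=1$ satisfies an estimate of the shape
$$
\Bigl|\sum_{u=1}^U e(F(u))\Bigr| \ll U\bigl(U^{-1} + q^{-1} + qU^{-d}\bigr)^{\sigma},
$$
with $\sigma = 1/(2(d-1)(d-2))$. Plugging in $q=p$ and observing that $p^{-1}\le U^{-1}$ in the range $U<p$, the $q^{-1}$ term is absorbed into $U^{-1}$ and the claimed inequality follows.

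The essentially only obstacle is bookkeeping: identifying the precise statement in~\cite{Wool} whose exponent agrees with $\sigma = 1/(2(d-1)(d-2))$, checking that it is phrased for real polynomials with a Dirichlet-type approximation to the leading coefficient alone (i.e.\ with no auxiliary hypothesis on $a_{d-1},\ldots,a_1$), and verifying that its conclusion, once specialised to $q=p$, matches the form of Lemma~\ref{lem:Wooley} after the trivial simplification $p^{-1}+U^{-1}\asymp U^{-1}$. Everything else is mechanical.
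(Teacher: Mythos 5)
Your proposal is correct and is precisely the intended derivation: the paper offers no proof of this lemma, presenting it as a ``very special case'' of Wooley's general Weyl-type bound for real polynomials, and your specialization (lift the coefficients to $\{0,\ldots,p-1\}$, take $\alpha=a_d/p$ with the exact rational approximation $q=p$, and absorb $q^{-1}=p^{-1}\le U^{-1}$ into the $U^{-1}$ term) is exactly how that specialization goes. The only bookkeeping caveat is that Wooley's statement carries a factor $U^{\varepsilon}$ (equivalently $U^{o(1)}$) which the lemma as printed suppresses, but this is harmless for every application in the paper, all of which tolerate $p^{o(1)}$ losses.
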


Clearly, Lemma~\ref{lem:Wooley} is nontrivial for $U \ge p^{1/d + \varepsilon}$ 
for any fixed $\varepsilon >0 $ and sufficiently large $p$. 
In fact, the classical results of 
Vinogradov~\cite{Vin} are also sufficient for  our purposes.

Now combining 
Lemma~\ref{lem:Wooley} with the  {\it Erd\H{o}s-Tur\'{a}n inequality}
(see, for example,~\cite[Theorem~1.21]{DrTi}), that relates the uniformity of distribution to exponential sums, we immediately obtain:

\begin{lemma}
\label{lem:f in I} For any $\varepsilon > 0$ there exist some $\eta>0$ such that  
uniformly over  polynomials $f(X) \in \F_p[X]$ and
$b \in \F_p$ 
of degree $d\ge 3$,  for $p^{1/d + \varepsilon} \le U < p$ $1 \le U,Z< p$,  we have  
\begin{equation*}
\begin{split}
\# \{(u,z)  \in [1,U] \times [1,Z]~ :~ f(u)    \equiv b&+z \pmod p\}\\
& \qquad = \frac{UZ}{p} + O\(U p^{-\eta}\),
\end{split}
\end{equation*}
where $\Delta$ is as in Lemma~\ref{lem:Wooley}.
%where $\sigma$ is as in Lemma~\ref{lem:Wooley}.
\end{lemma}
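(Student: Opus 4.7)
The plan is a standard reduction from counting to exponential sums via the Erd\H{o}s-Tur\'{a}n inequality, followed by a direct application of Lemma~\ref{lem:Wooley}. First I would rewrite the counting set as $\#\{u \in [1,U] : \{f(u)/p\} \in I\}$, where $I$ is an arc of $\R/\Z$ of length $Z/p$ (if $b+Z \ge p$ the arc wraps around the circle, in which case one decomposes it into at most two genuine subintervals of $[0,1)$, costing only a constant factor). By the Erd\H{o}s-Tur\'{a}n inequality, for any positive integer $H < p$ the discrepancy $D_U$ of $\{f(u)/p\}_{u=1}^{U}$ satisfies
$$
D_U \ll \frac{1}{H} + \frac{1}{U}\sum_{h=1}^{H}\frac{1}{h}\left|\sum_{u=1}^{U}\ep(hf(u))\right|.
$$

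For each $h$ with $1 \le h < p$, the polynomial $hf(X)$ has exact degree $d$ over $\F_p$ (since multiplication by $h \in \F_p^*$ preserves the leading term), so Lemma~\ref{lem:Wooley} applies \emph{uniformly} in $h$ and gives
$$
\left|\sum_{u=1}^{U}\ep(hf(u))\right| \ll U\(U^{-1} + pU^{-d}\)^{\sigma}.
$$
Under the hypothesis $U \ge p^{1/d+\varepsilon}$ one has $U^{-1} \le p^{-1/d-\varepsilon}$ and $pU^{-d} \le p^{-d\varepsilon}$, so the right-hand side is $\ll Up^{-\eta_0}$ for some $\eta_0 = \eta_0(\varepsilon,d) > 0$.

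Summing over $h \le H$ introduces only a factor $\log p$, yielding $D_U \ll 1/H + p^{-\eta_0}\log p$. Choosing $H = \rf{p^{\eta_0/2}}$ balances the two terms and gives $D_U \ll p^{-\eta}$ for any $\eta < \eta_0/2$. The standard equivalence between discrepancy and interval counts built into the Erd\H{o}s-Tur\'{a}n framework then converts this bound into
$$
\#\{u \in [1,U] : \{f(u)/p\} \in I\} = \frac{UZ}{p} + O\(Up^{-\eta}\),
$$
which is precisely the claim. The only substantive point is that Lemma~\ref{lem:Wooley} is uniform in the leading coefficient $ha_d$ of $hf$, which is automatic from its statement; everything else is routine bookkeeping with $\varepsilon$ and $\eta$, and no genuine obstacle arises.
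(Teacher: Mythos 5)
Your proof is correct and follows essentially the same route as the paper, which obtains the lemma by combining Lemma~\ref{lem:Wooley} with the Erd\H{o}s--Tur\'{a}n inequality; you have simply written out the details (uniformity of Wooley's bound over the dilates $hf$, the choice of $H$, and the conversion from discrepancy to interval counts) that the paper leaves implicit.
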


Obviously, the parameter $b$ does not add any generality to 
Lemma~\ref{lem:f in I} (compared to just $b =0$), 
however this is the form in which we apply it. 
In particular, we see that in the terminology of Section~\ref{sec:u-distr cong} the sequence 
of polynomial values $\{f(u)~:~u=1, \ldots, U\}$  is  $\eta$-uniformly distributed 
modulo $p$ provided that $U \ge p^{1/d + \varepsilon}$.

%
%\subsection{Exponential sums with polynomials}
%\label{sec:exp cong}

 \section{Proofs of Main Results} 

 \subsection{Proof of Theorem~\ref{thm:S pure}}

Since $\cC$ is convex,  as in the proof of Lemma~\ref{lem:CharSum bound} 
we see that for each $u=1, \ldots, U$ we there  are
 integers $V \ge Y_u \ge X_u \ge 0$ such that 
$$
S_f(\cA;\fC)=
\sum_{u =  1}^{U} \alpha_u  \sum_{v=X_u+1}^{Y_u}  \e_p(v/f(u)).
$$

We follow the scheme of the proof of~\cite[Lemma~3]{Shp1}.
In particular,  we define 
$$
I = \fl{\log(2p/V)} \mand J = \fl{\log (2p)}. 
$$

Furthermore, we extend the definition of $\rho(\alpha)$ from Section~\ref{sec:mult cong}
to rational numbers $\alpha = u/v$ with $\gcd(v,p)=1$,
 as  $\rho(\alpha) = |w|$  where $w$ is the unique integer with $w \equiv u/v \pmod p$
and $|w| < p/2$. 

Using the bound 
\begin{equation}
\label{eq:Incompl}
  \sum_{v=X_u+1}^{Y_u} \ep(\alpha v) 
\ll  \min\left\{V, \frac{p}{\rho(\alpha)}\right\},
\end{equation}
which holds for any rational $\alpha$ with the denominator that is not a 
multiple of $p$
(see~\cite[Bound~(8.6)]{IwKow}), we derive
\begin{equation}
\label{eq:Prelim}
S_f(\cA;\fC)=
\sum_{u =  1}^{U}  \min\left\{V, \frac{p}{\rho(a/f(u))}\right\}.
\end{equation}
Hence, 
we obtain a version 
of~\cite[Equation~(1)]{Shp1}: 
\begin{equation}
\label{eq:W RT}
S_f(\cA;\fC) \ll V R  
+ p \sum_{j = I+1}^J Q_{j} e^{-j},
\end{equation}
where
\begin{equation*}
\begin{split}
&R =\# \left\{u~:~ 1 \le u \le U,   \  
 \rho(a/f(u)) < e^{I} \right\},\\
&Q_{j} =\# \left\{u~:~ 1 \le u \le U, \  
e^j \le  \rho(a/f(u)) < e^{j+1} \right\}.
\end{split}
\end{equation*}

We now see that Lemma~\ref{lem:NfUZ} implies the 
bounds
$$
  R \le  \(p^{-1/(d+1)}  U^{d/2}e^I + 1\)  p^{o(1)} \le 
  p^{d/(d+1)}  U^{d/2} V^{-1}  + p^{o(1)}
$$
and
$$ 
  Q_j \le p^{-1/(d+1)+o(1)}  U^{d/2}e^{j} + p^{o(1)}.
$$
Substituting these bounds in~\eqref{eq:W RT}, we obtain 
\begin{equation*}
\begin{split}
 \left|S_f(\cA;\fC) \right|  &\ll  p^{d/(d+1)+o(1)}  U^{d/2}+ V p^{o(1)}\\
& \qquad\qquad\qquad \qquad+ p^{1+o(1)}\sum_{j = I+1}^J\(p^{-1/(d+1)}  U^{d/2}e^{j} +1\) e^{-j} \\
&   = \(p^{d/(d+1)}  U^{d/2}+ V 
+   J p^{d/(d+1) }  U^{d/2}  + p^{1 }e^{-I}\)p^{o(1)}\\
&   = p^{d/(d+1)+o(1)}  U^{d/2}+ V p^{o(1)},
\end{split}
\end{equation*}
which concludes the proof.

\subsection{Proof of Corollary~\ref{cor:S pure slice}}

Note that we can assume that $V >  p^{d/(d+1)}$
as otherwise the bound is trivial. 

We now choose some parameter $K\ge 1$ and slice the summation region $\fC$ vertically 
into $K$ domains of the form 
$\fC\cap [A+1, A+\widetilde{U}]$ with $\widetilde{U} = U/K + O(1)$. 
Applying Theorem~\ref{thm:S pure}
to the polynomial $f(A+X)$ to estimate the sum over $\fC\cap [A+1, A+\widetilde{U}]$,  we obtain 
$$
 \left|S_f(\cA;\fC) \right| \le K\( p^{d/(d+1)}( U/K)^{d/2}+ V\) p^{o(1)}. 
$$
We now set 
$$K = \rf{UV^{-2/d} p^{2/(d+1)}} 
$$
and conclude the proof.

\subsection{Proof of Theorem~\ref{thm:Sabd mix}}

Since $\fC$ is convex, for each $v \in [1,V]$ 
there are some integers $U \ge Y_v\ge X_v\ge 0$ such that for 
an integer $u$ the condition
$(u,v) \in \fC$ is equivalent to $X_u < u \le Y_u$. 
Hence, using the orthogonality of exponential functions, we obtain 
\begin{equation*}
\begin{split}
\sum_{u:\, (u,v) \in \fC} &\alpha_u  \ep\(v/(au+b)^d\)   =
\sum_{u= X_v+1}^{Y_v} \alpha_u \ep\(v/(au+b)^d\)\\
& = \sum_{u=1}^U\alpha_u \ep(v/(au+b)^d)\sum_{x= X_v+1}^{Y_v}\frac{1}{p}
\sum_{\lambda=-(p-1)/2}^{(p-1)/2}
 \ep(\lambda(x-u))\\
& = 
\frac{1}{p} \sum_{\lambda=-(p-1)/2}^{(p-1)/2}
\sum_{u=1}^U\alpha_u  \ep(v/(au+b)^d - \lambda u)
\sum_{x= X_v+1}^{Y_v} \ep(\lambda x).
\end{split}
\end{equation*}
Recalling~\eqref{eq:Incompl}, we obtain 
\begin{equation*}
\begin{split}
 \left|\sum_{u:\, (u,v) \in \fC}  \alpha_u \ep\(v/(au+b)^d\)\right| &\\
 \ll  
\sum_{\lambda=-(p-1)/2}^{(p-1)/2} \frac{1}{|\lambda|+1} &
 \left| \sum_{u=1}^U\alpha_u  \ep(v/(au+b)^d - \lambda u)\right|. 
 \end{split}
\end{equation*}
Thus, we  conclude that 
\begin{equation}
\label{eq:K absval}
\begin{split}
|K_{a,b}(\cA, \cB; \fC)| & \le 
\sum_{v =  1}^{V} \left|\sum_{u:\, (u,v) \in \fC}  \alpha_u  \ep\(v/(au+b)^d\) \right|\\
& \ll \sum_{\lambda=-(p-1)/2}^{(p-1)/2} \frac{1}{|\lambda|+1} W(\lambda),
 \end{split}
\end{equation}
where 
$$
W(\lambda)=
\sum_{v =  1}^{V}
 \left| \sum_{u=1}^U\alpha_u(\lambda)  \ep\(v/(au+b)^d\)\right|
$$
with $\alpha_u(\lambda) = \alpha_u \ep (- \lambda u)$. 
Using the H{\"o}lder inequality and the extending the range of 
summation over $v$ to teh whole field $\F_p$, 
we derive
\begin{equation*}
\begin{split}
W(\lambda)^{2k} &\le V^{2k-1}
\sum_{v =  1}^{V}
 \left| \sum_{u=1}^U\alpha_u(\lambda)  \ep\(v/(au+b)^d\)\right|^{2k}\\
  &\le V^{2k-1}
\sum_{v\in \F_p}
 \left| \sum_{u=1}^U\alpha_u(\lambda)  \ep\(v/(au+b)^d\)\right|^{2k}. 
\end{split}
\end{equation*}
Using that for any complex $\xi$ we have $|\xi|^2 = \xi \overline \xi$,
expanding the $(2k)$-th power and changing the order of summation, 
we derive
\begin{equation*}
\begin{split}
W(\lambda)^{2k} &\le V^{2k-1}
\sum_{u_1, \ldots, u_{2k}=1}^U
\prod_{i=1}^{k} \alpha_{u_{2i-1}}(\lambda) 
\overline{  \alpha_{u_{2i}}(\lambda)}\\
& \qquad \qquad \qquad \qquad \qquad \qquad \sum_{v\in \F_p}
\ep\(v \sum_{j=1}^{2k}  \frac{(-1)^{j}}{(au_j+b)^d}\)\\
&\le V^{2k-1}
\sum_{u_1, \ldots, u_{2k}=1}^U
\sum_{v\in \F_p}
\ep\(v \sum_{j=1}^{2k}  \frac{(-1)^{j}}{(au_j+b)^d}\) \\
&=  V^{2k-1} p J_{d,k}(a,b;U), 
\end{split}
\end{equation*}
where $J_{d,k}(a,b;U)$ is as in Section~\ref{sec:cong recip}.
From Lemma~\ref{lem:Jdk}  we now easily derive
\begin{equation}
\label{eq:W bound}
W(\lambda) \le V^{1-1/2k}  \( U + U^{k/(k+1)} p^{1/2k}\) p^{o(1)},
\end{equation}
which we  substitute  in~\eqref{eq:K absval} and 
conclude the proof.

\subsection{Proof of Theorem~\ref{thm:K mix 1}}  
We proceed exactly as in the proof of Theorem~\ref{thm:Sabd mix} with $k=2$
but use   Lemma~\ref{lem:J2} instead of Lemma~\ref{lem:Jdk},
getting 
$$
W(\lambda)^4 \le V^{3} p\(U^{7/2+o(1)}p^{-1/2} +U^{2+o(1)}\)
$$
instead of~\eqref{eq:W bound}.

\subsection{Proof of Theorem~\ref{thm:K mix 2}} 

Writing 
$$
|K_{a,b}(\cA, \cB; \fC)| \le 
\sum_{v =  1}^{V} \left|\sum_{u:\, (u,v) \in \fC} \alpha_u \ep(v/(au+b))\right|
$$
and using the Cauchy inequality, we obtain 
\begin{equation*}
\begin{split}
|K_{a,b}(\cA&, \cB; \fC)|^2 \le 
V \sum_{v =  1}^{V} \left|\sum_{u:\, (u,v) \in \fC} \alpha_u \ep(v/(au+b))\right|^2\\
&= V \sum_{v =  1}^{V}\sum_{u:\, (u,v) \in \fC}\, \sum_{w:\, (w,v) \in \fC}  
\alpha_{u} \alpha_{w} \ep\(\frac{v}{au+b}-\frac{v}{aw+b}\)\\
&= V \sum_{v =  1}^{V}\sum_{u:\, (u,v) \in \fC} \, \sum_{w:\, (w,v)  \in \fC}  
\alpha_{u} \alpha_{w} \ep\(\frac{av(w-u)}{(au+b)(aw+b)}\).
\end{split}
\end{equation*}
Hence, changing the order of summation again, we obtain
$$
|K_{a,b}(\cA, \cB; \fC)|^2 \le 
V  \sum_{u,w=1}^U   
\left|\sum_{v\in \cI(u,w)}\ep\(\frac{av(w-u)}{(au+b)(aw+b)}\)\right|,
$$
where $\cI(u,w)$ is the set of integers $v \in [1,V]$ 
with $(u,v) \in \fC$ and $(w,v) \in \fC$. Therefore,  writing $w = u+z$, 
we derive
\begin{equation}
\label{eq:K uwz}
\begin{split}
|K_{a,b}(\cA&, \cB; \fC)|^2 \\
& \le 
V \sum_{z=-U}^U   \sum_{u=1}^U   
\left|\sum_{v\in \cI(u,u+z)}\ep\(\frac{avz }{(au+b)(a(u+z)+b)}\)\right|. 
\end{split}
\end{equation}

Now, for $z=0$ we estimate the double   sum over $u$ and $v$ 
trivially as $UV$.
Otherwise, we note that $\cI(u,u+z)$ is an interval and
then  proceed exactly as in the proof of  Theorem~\ref{thm:S pure}
arriving to ~\eqref{eq:Prelim}. Hence, for every $z\ne 0$, 
to the sum over $u$ and $v$ in~\eqref{eq:K uwz} we obtain 
the bound of  Theorem~\ref{thm:S pure} with $d =2$. Collecting 
these estimates, we obtain  
\begin{equation*}
\begin{split}
|K_{a,b}(\cA, \cB; \fC)|^2 & \le UV^2 + UV\(p^{2/3+o(1)}  U+ V p^{o(1)}\)\\
& = U^2V p^{2/3+o(1)}+  UV^2 p^{o(1)}, 
\end{split}
\end{equation*}
and the result now follows.

\subsection{Proof of Theorem~\ref{thm:T pure}}

The result follows immediately from the combinations of Lemmas~\ref{lem:CharSum bound} 
and~\ref{lem:f in I}.

\section{Comments}
\label{sec:comm}

 Theorems~\ref{thm:S pure} and~\ref{thm:T pure} also
improve the generic bounds~\eqref{eq:trivial S1}, 
\eqref{eq:trivial S2} 
and~\eqref{eq:trivial T} for a very wide range of parameters. 

Our arguments also apply to the sums
$$
\sum_{(u,v)\in \fC} \alpha_u \beta_v \e_p(v f(u)),\qquad f\in \F_p[X],
$$
where one can use the results from~\cite{CCGHSZ,CGOS}
instead of  Lemma~\ref{lem:NfUZ}. However, after an application 
of the Cauchy inequality and ``smoothing'' the summation over $u$ one 
can also use the bounds of Wooley~\cite{Wool} directly, while our 
approach does not seem to give any substantial gain. 

We also remark that~\eqref{eq:Prelim} immediately implies the bound 
 \begin{equation}
\label{eq:trivial S2}
S_f(\cA; \fC) \ll p \log p, 
 \end{equation}
  which is better than~\eqref{eq:trivial S1} (however~\eqref{eq:trivial S1} 
  applies to more general sums). 
 
It is easy to see that for $d=1$ each of Theorems~\ref{thm:Sabd mix}, 
 \ref{thm:K mix 1} and~\ref{thm:K mix 2} has a range of parameters
 where it is stronger than the other two (and the
  bound~\eqref{eq:trivial S2}).

 It easy to see that for $d=1$ each of the Theorems~\ref{thm:Sabd mix}, 
 \ref{thm:K mix 1} and~\ref{thm:K mix 2} has a range of parameters
 where it is stronger than the other two (and the
 generic bound~\eqref{eq:trivial S2}.

 In Table~\ref{tab:compare} we give examples which illustrate
 this (`$\ast$' indicates the winning bound and `---' indicates that 
 the corresponding bound is trivial. Note that in all out examples
 $UV \le p$, the bound ~\eqref{eq:trivial S2} is always trivial and
 so is not included in  Table~\ref{tab:compare}.
We also suppress  
 the terms $p^{o(1)}$). In the case of Theorem~\ref{thm:Sabd mix}, 
we also give the optimal values of $k$. 
\begin{table}[H]
  \centering
\begin{tabular}{|c|c|c|c|c|}
\hline
$(U,V)$   & Thm.~\ref{thm:Sabd mix}  & 
Thm.~\ref{thm:K mix 1} & Thm.~\ref{thm:K mix 2} \\
\hline
$(p^{2/5},p^{3/10})$   & $\ast~p^{419/600}~(k=14\text{ or } 15)$&---&---\\ %%k = 15
$(p^{2/5},p^{2/5})$  & $p^{111/140}~(k=6 \text{ or } 7)$&$\ast~p^{31/40}$&---\\
$(p^{1/5},p^{4/5})$  & $p^{59/60}~(k=2 \text{ or } 3)$&$ p^{19/20}$&$\ast~p^{14/15}$\\
\hline
\end{tabular}\medskip
\caption{Comparison between Theorems~\ref{thm:Sabd mix}, 
 \ref{thm:K mix 1} and~\ref{thm:K mix 2}}
\label{tab:compare}
\end{table}

It is certainly interesting to obtain nontrivial estimates
on  the sums $S_f(\cA, \cB; \fC)$
and $T_f(\cA, \cB; \fC)$
in full generality or even on more general sums, where $f(X)$ is a 
rational function rather than a polynomial.
For example, for  the sums $S_f(\cA, \cB; \fC)$, to get such result one
needs appropriate extensions of the bounds 
on $N_f(U,Z)$ in Section~\ref{sec:mult cong} to arbitrary rational 
functions. Obtaining such bounds is certainly of independent 
interest. In fact it is easy to see that
a standard application of the Weil bound of exponential sums with rational
functions, see, for example,~\cite{MorMor}, leads to the asymptotic 
formula
\begin{equation}
\label{eq:Weil}
N_f(U,Z) = \frac{UZ}{p} + O(p^{1/2} (\log p)^2), 
\end{equation}
for any nontrivial rational function $f$. It is obvious that~\eqref{eq:Weil} 
is nontrivial for $U\ge p^{1/2 + \varepsilon}$, with  any fixed 
$\varepsilon$ and thus can be used to estimate the sum  $S_f(\cA; \fC)$
and $S_f(\cA, \cB; \fC)$. Furthermore, in this range one can simply use the 
Cauchy inequality to obtain a full analogue of~\eqref{eq:K absval}
for the sums $S_f(\cA, \cB; \fC)$, and then apply to the inner sums  the Weil bound
for incomplete sums with rational functions. The same standard approach also works
for the sums $T_f(\cA, \cB; \fC)$. However we are mostly interested in 
small values of $U$, beyond  the reach of the Weil bound.

%\section*{Acknowledgement}
%
%The author is grateful to Valentin Blomer and Tim Browning for very 
%useful discussions. 
% This work was  supported in part by ARC Grant~DP140100118.

\end{document}